\documentclass[12pt]{article}
\usepackage{amsmath,amscd,amsbsy,amssymb,latexsym,url,bm,amsthm}
\usepackage[vlined,boxed,commentsnumbered,linesnumbered,ruled]{algorithm2e}
\usepackage{epsfig,graphicx,subfigure}
\usepackage{enumitem,balance,mathtools}
\usepackage{wrapfig}
\usepackage{mathrsfs, euscript}
\usepackage[usenames]{xcolor}
\usepackage{hyperref}
\usepackage{epstopdf}
\usepackage{float}
\usepackage{comment}
\author{Pengyu Chen\thanks{School of Mathematical Sciences, Shanghai Jiao Tong University, Shanghai, China \href{mailto:fwang256@sjtu.edu.cn}{\texttt{chenpy653@sjtu.edu.cn}}} \and Zeren Zhang\thanks{School of Mathematical Sciences, Shanghai Jiao Tong University, Shanghai, China \href{mailto:zhangzr0018@sjtu.edu.cn}{\texttt{zhangzr0018@sjtu.edu.cn}}}}
\title{The stability threshold for Boussinesq equations around stratified Couette flow with unequal
viscosity and thermal diffusivity on $\mathbb{T}\times\mathbb{R}^2$ }
\date{}
\allowdisplaybreaks[4]
\def\d{\mathrm{d}}

\def\'{^{\prime}}

\newcommand{\wtb}{\langle\partial_Z\rangle }

\newcommand{\nm}[2]{\left\| #1 \right\| _{#2} }

\newtheorem{theorem}{Theorem}[section]
\newtheorem{lemma}{Lemma}[section]
\newtheorem{proposition}{Proposition}[section]

\newtheorem{remark}{Remark}[section]

\numberwithin{equation}{section}

\begin{document}
    \maketitle
    \begin{abstract}
		We establish a stability threshold for the 3D Boussinesq equations around Couette flow $(y,0,0)$ with linearly stratified temperature profile $1+y$ on $\mathbb{T}\times\mathbb{R}^2$, with distinct viscosity $\kappa$ and thermal diffusivity $\mu$ and arbitrary Brunt-V\"ais\"al\"a frequency $\beta>0$. Assume that $\mu$ and $\kappa$ are comparable, and set $\nu=\min\{\kappa,\mu\}$. For sufficiently small \(a_0>0\) and \(s\geq4\), we prove that if the initial perturbation satisfies $\|(u_{in},\theta_{in})\|_{H^s}+\|(u_{0,in},\theta_{0,in})\|_{W^{s,1}}\lesssim\nu^{\frac{2}{3}} |\ln{\nu}|^{-2-2a_0}$, then the corresponding solution exists globally in time. We also establish Strichartz-type estimates for zero mode. Moreover, our estimates further allow for a nonlinear amplification of order $\nu^{-1/6}$ in the high-regularity norm of the zero mode of the first component $u^1_0$.
	\end{abstract}
	\section{Introduction}
		In this paper, we consider the 3D Boussinesq equations on $\mathbb{T}\times\mathbb{R}^2$:
	\begin{equation*}
		\left\{\begin{array}{l}
			\partial_t \tilde{u}+\tilde{u} \cdot \nabla \tilde{u}+\nabla \tilde{p}=\kappa \Delta \tilde{u}+(0,\mathfrak{g}\theta,0)^T, \\
			\partial_t \tilde{\theta}+\tilde{u} \cdot \nabla \tilde{\theta}=\mu \Delta \tilde{\theta}, \\
			\nabla \cdot \tilde{u}=0 ,
		\end{array}\right.
	\end{equation*}
	where $\tilde{u}$ is the velocity, $\tilde{\theta}$ is the temperature, $\tilde{p}$ is the pressure, $\kappa$ is the viscosity coefficient, $\mu$ is the thermal diffusivity, and  $\mathfrak{g}>0$ is the constant of gravity. A stationary states
	\begin{equation*}
		u^s=(y,0,0)^T,\quad\partial_yp^s=\mathfrak{g}(1+\alpha y),\quad\theta^s=1+\alpha y,
	\end{equation*}
	where the constant $\alpha>0$ is the Richardson number.
	We introduce the perturbations of the Couette flow and stratified temperature:
	\begin{equation*}
		u=\tilde{u}-(y,0,0)^{T},\quad\sqrt{\alpha/\mathfrak{g}}\ \theta=(1+\alpha y)-\tilde{\theta},
	\end{equation*}
	and then $(u,\theta)$ satisfies
	\begin{equation}\label{eq-main}
		\left\{
		\begin{aligned}
			&\partial_t u+y\partial_x u+(u^2,0,0)^T+(u\cdot\nabla)u+\nabla p^L+\nabla p^{NL}=\kappa \Delta u-\beta(0,\theta,0)^T,\\
			&\partial_t \theta+y\partial_x \theta-\beta u^2+(u\cdot\nabla)\theta=\mu\Delta\theta,\\
			&\Delta p^L=-2\partial_x u^2-\beta\partial_y \theta,\quad\Delta p^{NL}=-\partial_i u^j\partial_j u^i,\\
			&(u,\theta)|_{t=0}=(u_{in},\theta_{in}),
		\end{aligned}
		\right.
	\end{equation}
	where $\beta=\sqrt{\alpha\mathfrak{g}}>0$ is the Brunt-V\"ais\"al\"a frequency. 
    Define
    \begin{equation*}
        \nu:=\min\{\kappa,\mu\}.
    \end{equation*}To understand the underlying transition mechanisms of the above system, we are concerned with the stability threshold problem in the Sobolev spaces and formulate it as \cite{BGM17}:
	
	\textbf{Stability threshold}: Given $N\geq0$, determine $\gamma\in\mathbb{R}$ such that
	\begin{align*}
		&\|(u_{in},\theta_{in})\|_{H^N}\ll\nu^{\gamma}\ \Rightarrow\ \mathrm{stability},\\
		&\|(u_{in},\theta_{in})\|_{H^N}\gg\nu^{\gamma}\ \Rightarrow\ \mathrm{possible \ instability}.
	\end{align*}
	
	Quantifying stability thresholds represents an active research frontier originating from fluid dynamics studies of the Navier-Stokes equations.	
	A seminal contribution by Bedrossian, Germain, and Masmoudi \cite{BGM17} established the threshold $\gamma = \frac{3}{2}$ for 3D perturbations of Couette flow on $\mathbb{T} \times \mathbb{R} \times \mathbb{T}$ in Sobolev spaces. Subsequent refinements by Wei and Zhang \cite{WZ21} improved this to $\gamma = 1$ in $H^2$. In two dimensions, the absence of the lift-up effect leads to substantially lower thresholds. For perturbation on $\mathbb{T} \times \mathbb{R}$, Bedrossian, Vicol, and Wang \cite{BVW18} established a threshold $\gamma = \frac{1}{2}$, with later work \cite{MZ22,WZ23} sharpening this to $\gamma = \frac{1}{3}$. Related problem in Gevrey classes has also been studied, with results on $\mathbb{T}\times\mathbb{R}$ and $\mathbb{T}\times\mathbb{R}\times\mathbb{T}$ given in \cite{BMV16,LMZ25} and \cite{BGM20,BGM22}, respectively. The corresponding problem in bounded domains has led to a number of further developments \cite{CWZ24,CLWZ20,BHIW25,WZ26,BHIW24,BHILW25a}. For broader perspectives on general shear flows, we refer to \cite{LWZ20,C23,AB25,CJWZ25,LLZ25,LSZ26,CEW20}.
	
	The Boussinesq equations provide a natural framework for investigating the interplay between mixing and buoyancy. In two dimensions, the stability of Couette flow has been studied extensively. For Richardson number $\alpha>\frac{1}{4}$, a threshold $\gamma=\frac{1}{2}$ \cite{ZZ23} was established on $\mathbb{T}\times\mathbb{R}$ in Sobolev spaces, and later improved to $\frac{1}{3}$ in \cite{RW26,K25}. In particularly, the threshold in \cite{RW26} was estabilish for $\kappa+\mu\lesssim\sqrt{\kappa\mu}$. For perturbations on $\mathbb{R}^2$, a slight larger threshold $\gamma=\frac{1}{2}+$ \cite{A25} was also obtained. Stability studies in Gevrey class can be found in \cite{MSZ22,BBCD2023}. In the unstratified case $\alpha=0$, stability results have also been established on $\mathbb{T}\times\mathbb{R}$, $\mathbb{T}\times[-1,1]$, and $\mathbb{R}^2$; see \cite{DWZ21,NZ24,ZZi23}, \cite{MZZ23,LWZ25}, and \cite{CWY25}, respectively. In three dimensions, stratification gives rise to internal gravity waves and associated dispersive effects. A key breakthrough in this direction was achieved by Coti Zelati, Del Zotto, and Widmayer \cite{CD23,CDW24}, who established the improved stability thresholds on $\mathbb{T}\times\mathbb{R}\times\mathbb{T}$ in Sobolev spaces: namely $\gamma=\frac{11}{12}$ for Brunt-V\"ais\"al\"a frequency $\beta>\frac{1}{2}$, and $\gamma=\frac{8}{9}$ for $\beta>\nu^{-\frac{1}{2}}$. 
	Remarkably, these thresholds are quite lower than the threshold $\gamma=1$ for the 3D Navier--Stokes equations, thereby demonstrating the stabilizing effect of stratification.	Subsequent work \cite{CWW25} considered the unstratified case $\alpha=0$, obtaining the threshold $\|u_{in}\|_{H^2}\lesssim\nu, \|\theta_{in}\|_{H^2}\lesssim\nu^2$. Analogous dispersion--mixing interactions have been explored in the Navier--Stokes--Coriolis system. Recently, Li, Sun, Wang, Wei, and Zhang \cite{LSWZ25} established the thresholds $\gamma=\frac{2}{3}+$ on $\mathbb{T}\times\mathbb{R}^2$ and $\gamma=\frac{5}{6}$ on $\mathbb{T}\times\mathbb{R}\times\mathbb{T}$.
	For more results on this system can be found in \cite{HSX24a,CDW25}. More broadly, the interplay between mixing and additional physical effects has been investigated in various coupled systems, including rotating Boussinesq, magnetohydrodynamic, and compressible flows; see \cite{HLSX26,L20,K25,HLX26,LWZ25} and the references therein.
	
	The purpose of this paper is to establish the threshold $\gamma=\frac{2}{3}+$ for \eqref{eq-main} on $\mathbb{T}\times\mathbb{R}^2$ for $\beta>0$ when $\kappa\approx\mu$. In addition, our result allow for an amplification of order $\nu^{-\frac{1}{6}}$ in the high regularity norm of $u^1_0$. Define
	\begin{equation*}
		f_0=\int_{\mathbb{T}}f(x,y,z)dx,\quad f_{\neq}=f-f_0.
	\end{equation*}
    Our main result is stated as follows:
	\begin{theorem}\label{mainthe}
		Assume that $\beta>0$, $0<\kappa,\mu\leq1$, and $C_1^{-1}\mu\leq\kappa\leq C_1\mu$ for some constant $C_1\geq1$. Set $\nu=\min\{\kappa,\mu\}$ and let $(u_{in},\theta_{in})$ be the initial datum for \eqref{eq-main}. For any $0<a_0\ll1$ and $s\geq4$, there exists $\epsilon_0=\epsilon_0(\beta,s,a_0,C_1)>0$ such that, if it holds that
		\begin{equation}
			\|\langle \nabla_{x,z}\rangle (u_{in},\theta_{in})\|_{H^{s+1}}+\|\wtb(u_{0,in},\theta_{0,in})\|_{W^{s,1}}=\epsilon\leq\epsilon_0\nu^{\frac{2}{3}}|\ln{\nu}|^{-2-2a_0},
		\end{equation}
		then the system \eqref{eq-main} admits a unique global solution. Moreover, the profile $(U,\Theta)(t,x,y,z)=(u,\theta)(t,x+yt,y,z)$ satisfies the following estimates:
		\begin{subequations}\label{est-main}
			\begin{align}
				&\|e^{a_1\nu^{\frac{1}{3}}t}\partial_x\nabla_{x,z}U^1_{\neq}\|_{L^\infty H^s}+\nu^{\frac{1}{6}}\|\partial_x\nabla_{x,z}U^1_{\neq}\|_{L^2H^s}\lesssim|\ln\nu|^{1+a_0}\epsilon,\label{main-u1}\\
				&\|e^{a_1\nu^{\frac{1}{3}}t}\nabla_{x,z}\nabla_LU^2_{\neq}\|_{L^\infty H^s}+\|\partial_x\nabla_{x,z}U^2_{\neq}\|_{L^2 H^s}+\nu^{\frac{1}{6}}\|\nabla_{x,z}\nabla_LU^2_{\neq}\|_{L^\infty H^s}\lesssim\epsilon,\label{main-u2}\\
				&\|e^{a_1\nu^{\frac{1}{3}}t}\nabla_{x,z}^2U^3_{\neq}\|_{L^\infty H^s}+\nu^{\frac{1}{6}}\|\nabla_{x,z}^2U^3_{\neq}\|_{L^2H^s}\lesssim|\ln\nu|^{1+a_0}\epsilon,\label{main-u3}\\
				&\|e^{a_1\nu^{\frac{1}{3}}t}\nabla_{x,z}^2\Theta_{\neq}\|_{L^\infty H^s}+\nu^{\frac{1}{6}}\|\nabla_{x,z}^2\Theta_{\neq}\|_{L^2H^s}\lesssim\epsilon,\label{main-theta}\\
				&\|\wtb U^1_0\|_{L^\infty H^s}+\nu^{\frac{1}{6}}\| U^1_0\|_{L^\infty H^{s+1}}+\|\wtb (U^{2}_0,U^3_0,\Theta_0)\|_{L^\infty H^{s+1}}\lesssim\epsilon,\label{main-u10}\\
				&\nu^{\frac{1}{12}+a_2}\|\wtb(U^2_0,\Theta_0)\|_{L^2W^{s-2,\infty}}+\nu^{a_2}\|\wtb U^3_{0}\|_{L^2W^{s-1}}\lesssim\epsilon.\label{main-u230}
			\end{align}
		\end{subequations}
		Here $a_1,a_2$ are sufficiently small constants, $\nabla_{x,z}=(\partial_x,\partial_z)$, and $\nabla_L=(\partial_x,\partial_y-t\partial_x,\partial_z)$.
	\end{theorem}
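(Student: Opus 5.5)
The proof will be a bootstrap argument in the moving frame $X=x-yt$. We write the system for the profile $(U,\Theta)$ and split every unknown into its zero mode $f_0$ and its non-zero mode $f_{\neq}$. For the non-zero modes it is convenient to change unknowns, following the 3D Couette/stratified literature \cite{WZ21,CDW24,LSWZ25}. We use $Q^2=\Delta_L U^2_{\neq}$, the good unknown for $U^1_{\neq}$ and $U^3_{\neq}$ (for instance $\partial_x U^3-\partial_z U^1$ and $\nabla_{x,z}\cdot$ combinations), and the temperature $\Theta_{\neq}$. In these variables the linearised operator has three parts: the transport is removed, the lift-up is confined to the zero mode, and the coupling $\pm\beta$ between $U^2$ and $\Theta$ appears as the wave system
\begin{equation*}
\partial_t Q^2=\beta\,\Delta_{x,z}\Delta_L^{-1}\ldots,\qquad
\partial_t\Theta=\beta U^2+\ldots.
\end{equation*}
Because $\kappa\neq\mu$, the two dissipations are $\kappa\Delta_L$ and $\mu\Delta_L$. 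We handle this by a weighted symmetrizer: define an energy $E_{\neq}$ that pairs $\nabla_{x,z}\nabla_L U^2$ against $\beta\nabla_{x,z}\Theta$ with a ghost multiplier $\mathcal M$ of Bedrossian–Germain–Masmoudi type, $\partial_t \mathcal M/\mathcal M\sim \frac{k^2}{k^2+(\eta-kt)^2}+\nu^{1/3}\mathbf 1_{|\eta/k-t|\le\nu^{-1/3}}$. The difference $(\kappa-\mu)$ then gives only a cross term, which we absorb using $C_1^{-1}\mu\le\kappa\le C_1\mu$ and $\beta>0$. The linear analysis yields enhanced dissipation $e^{-a_1\nu^{1/3}t}$ and inviscid damping. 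From it we read off the weights in \eqref{main-u1}–\eqref{main-theta}: $U^2$ and $\Theta$ are undamaged, while $U^1_{\neq}$ and $U^3_{\neq}$ lose the factor $|\ln\nu|^{1+a_0}$. That loss comes from integrating the lift-up forcing $-U^2$ against the weight $\langle t\rangle^{-1}$ up to the enhanced-dissipation time $\nu^{-1/3}$, with a slightly strengthened weight so that the sum converges.

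For the zero modes we argue as follows. $(U^2_0,U^3_0,\Theta_0)$ solves a 2D linear system on $\mathbb R^2$ in $(y,z)$. After the Leray projection this is a stratified Stokes problem whose linear part is a dispersive internal-wave operator $e^{\pm i\beta t\,\partial_z/|\nabla_{y,z}|}$ with damping $\kappa,\mu$. We will prove the Strichartz estimate \eqref{main-u230}. The $L^1\to L^\infty$ dispersive decay of this anisotropic wave group is $t^{-1/2}$, and we obtain it by stationary phase in polar coordinates, using the $\langle\partial_Z\rangle$ weight and the $W^{s,1}$ data to control the degenerate directions. A $TT^*$ argument then gives $L^2_tL^\infty$ bounds, at the cost of a factor $\nu^{-1/12-a_2}$ coming from the viscous time cut-off $t\lesssim\nu^{-1/2}$ under the heat damping. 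Next comes $U^1_0$, which obeys $\partial_tU^1_0-\kappa\Delta U^1_0=-U^2_0+\text{NL}$. The lift-up term $\int_0^t U^2_0$ is controlled in $H^s$ with an $O(1)$ constant precisely thanks to this time-integrated dispersion, which is the role of the $L^2W^{s-2,\infty}$ norm. At one more derivative, however, we can only afford $\nu^{-1/6}$ growth; this is the stated amplification in \eqref{main-u10}.

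We set up the bootstrap hypotheses with constant $2C$ in place of $C$ in \eqref{est-main} on $[0,T^*]$, and then show that each norm improves to $C$ provided $\epsilon\le\epsilon_0\nu^{2/3}|\ln\nu|^{-2-2a_0}$. Local well-posedness and continuity then give global existence. The nonlinear terms are estimated by the usual splitting into $0\cdot\neq$, $\neq\cdot0$, $\neq\cdot\neq$ and $0\cdot0$ interactions. The decisive interaction, and the one I expect to be \emph{the main obstacle}, is $U^1_0\partial_x(\cdot)_{\neq}$ together with $(U^2_0,U^3_0)\cdot\nabla_{y,z}(\cdot)_{\neq}$ acting on the non-zero modes. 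In the profile variables these involve $\nabla_L$, so they cost a factor $t$. Using enhanced dissipation in $L^2$ with the $\nu^{1/6}$ gains, this forces the size condition $\|U^1_0\|_{H^{s}}\cdot\nu^{-1/3}\cdot(\text{log loss})^2\ll\nu^{1/3}$, i.e. $\epsilon\ll\nu^{2/3}|\ln\nu|^{-2-2a_0}$. The $y$-derivative falling on $U^1_0$ is where the $\nu^{-1/6}$ amplification at level $H^{s+1}$ has to be compensated by the extra $\nu^{1/6}$ in $\nu^{1/6}\|U^1_0\|_{L^\infty H^{s+1}}$.

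I would handle this by commuting $U^1_0(y,z)$ through the ghost weight and estimating the commutator $[U^1_0,\nabla_L]$ with the $L^2_t$ dissipation norms. For $U^2_0$ and $U^3_0$, whose crude $H^s$ bounds do not give time integrability, I would use the Strichartz norm \eqref{main-u230}. The $\neq\cdot\neq\to0$ terms feeding the zero modes are quadratic in exponentially decaying quantities and close easily. The $0\cdot0$ terms in the $U^1_0$ equation are controlled using the dispersive $L^2L^\infty$ bound, with the loss $\nu^{-1/12-a_2}$ absorbed by $\epsilon\nu^{-2/3}\ll1$. Uniqueness follows from a standard energy estimate on the difference of two solutions in a lower norm.
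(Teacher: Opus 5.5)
Your proposal has a genuine gap in the treatment of the zero mode $U^1_0$.

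\textbf{The lift-up in the zero mode.} You propose to control the lift-up forcing $-U^2_0$ in $\partial_tU^1_0-\kappa\Delta U^1_0=-U^2_0+\mathrm{NL}$ through the Strichartz norm $L^2W^{s-2,\infty}$. That cannot work. An $L^2_tL^\infty_{y,z}$ bound gives no control of the $L^2_{y,z}$-based quantity $\|\int_0^te^{\kappa(t-\tau)\Delta}U^2_0\,d\tau\|_{H^s}$. Moreover, $U^2_0$ is not in $L^1_tH^s$: you only have $L^\infty_tH^{s+1}$ and $\nu^{1/2}\|\nabla U^2_0\|_{L^2H^{s+1}}$, with no time integrability at low frequency. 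So an energy estimate for $U^1_0$ alone allows growth like $t\epsilon$.

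What keeps the lift-up bounded is not decay but an exact identity from the temperature equation, $\beta U^2_0=\partial_t\Theta_0-\mu\Delta\Theta_0+(U\cdot\nabla\Theta)_0$: the forcing is a time derivative. In Fourier, the time integral of $e^{\pm i\beta t l/|\xi|}$ is $O(|\xi|/(\beta|l|))$, and this is exactly compensated by the factor $l/|\xi|$ in the symbol of $U^2_0$. The paper builds this into $V^1_0=U^1_0+\beta^{-1}\Theta_0$, which solves $\partial_tV^1_0-\kappa\Delta V^1_0=\beta^{-1}(\mu-\kappa)\Delta\Theta_0-(U\cdot\nabla_LV^1)_0$ with no lift-up. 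The leftover linear term is harmless since $|\mu-\kappa|\le C_1\nu$.

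This, together with the $\nu_2\neq0$ branch of the dispersive analysis, is where $\kappa\approx\mu$ is actually used. No symmetrizer is needed for the non-zero modes: the $\beta$-coupling between $G=pU^2_{\neq}$ and $F=(pq)^{1/2}\Theta_{\neq}$ is already skew.

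\textbf{The high--low zero-mode interaction.} The same identity is needed a second time, at the term that really decides the threshold. This is not the $0\cdot\neq$ interaction you single out; those close directly at $\nu^{-2/3}\epsilon^3$ up to logarithms.

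In the $\langle\partial_Z\rangle H^s$ estimate of $V^1_0$, the high--low term $\nabla^sU^2_0\,\partial_YV^1_0$ cannot be handled by Strichartz, which only controls $U^2_0$ in $L^2W^{s-2,\infty}$. The direct bound $\|\nabla^sU^2_0\|_{L^2L^2}\|\partial_YV^1_0\|_{L^2L^\infty}\|V^1_0\|_{L^\infty H^s}$ is of size $\nu^{-1}\epsilon^3$. Moving two derivatives onto $V^1_0$ so as to use the Strichartz norm still costs about $\nu^{-3/4}\epsilon^3$. Neither closes at $\epsilon\sim\nu^{2/3}$.

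The paper substitutes $U^2_0$ from the $\Theta_0$ equation and integrates by parts in time (terms $T^2_{21}$--$T^2_{25}$). The time derivative then lands on $V^1_0$ and is replaced by $\kappa\Delta V^1_0-(U\cdot\nabla_LV^1)_0+\ldots$, costing at most $\nu^{-2/3}|\ln\nu|^{2+2a_0}\epsilon^3$. The analogous $U^3_0\partial_ZV^1_0$ term is handled differently: it uses the sharper bound $\nu^{a_2}\|\langle\partial_Z\rangle U^3_0\|_{L^2W^{s-1,\infty}}\lesssim\epsilon$ plus one spatial integration by parts. Your plan does not make this distinction between $U^2_0$ and $U^3_0$.

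\textbf{The unknown for $U^1_{\neq}$.} Your suggested unknowns for $U^1_{\neq}$, such as $\partial_xU^3-\partial_zU^1$ with $\nabla_{x,z}$ weights, require estimating $U^1_{\neq}$ directly. Its equation contains $U^3_{\neq}\partial_ZU^1_0$, and hence needs $U^1_0$ beyond $H^{s+1}$, where it is not controlled at the required size (already $H^{s+1}$ grows like $\nu^{-1/6}$). The paper avoids this with $W_{\neq}=(pq)^{1/2}U^3_{\neq}-\partial^L_{YZ}(pq)^{-1/2}G_{\neq}$ and recovers $\partial_XU^1_{\neq}$ from incompressibility.
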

	\begin{remark}
		The inviscid damping of $U^2_{\neq}$ is reflected in the uniform $\nu$-bounds in \eqref{main-u2}. 
		The	enhanced dissipation of the nonzero modes is characterized by the $\nu^{-\frac{1}{6}}$ growth of the $L^2$ in time estimates in \eqref{main-u1}--\eqref{main-theta}. Furthermore, the first term of \eqref{main-u10} describes the suppression of the lift-up effect for zero-mode. It is worth noting that  $\|U^1_0\|_{H^{s+1}}$ exhibits a growth of order $\nu^{-\frac{1}{6}}$ due to the nonlinear effects. Finally, \eqref{main-u230} provides Strichartz-type estimates for $(u^2_0,u^3_0,\theta_0)$ arising from the dispersive structure.
	\end{remark}
	\begin{remark}
		In the stable regime $\beta^2>\frac{1}{4}$, the estimates in \eqref{est-main} imply the following inviscid damping and enhanced dissipation estimates:
		\begin{align}\label{est-idl}
			|\ln(\nu)|^{-(1+a_0)}\|(U^1_{\neq},U^3_{\neq})(t)\|_{L^2}+\langle t\rangle \|U^2_{\neq}(t)\|_{L^2}\lesssim e^{-a_0\nu^{\frac{1}{3}}t}\epsilon,
		\end{align}
		However, the following linear estimate has been established in \cite{CD23} :
		\begin{align}\label{est-id}
			\|(U^1_{\neq},U^3_{\neq})(t)\|_{L^2}+\langle t\rangle^{\frac{1}{2}}\|\Theta_{\neq}(t)\|_{L^2}+\langle t\rangle^{\frac{3}{2}} \|U^2_{\neq}(t)\|_{L^2}\lesssim e^{-a_0\nu^{\frac{1}{3}}t}\epsilon.
		\end{align} 
		The loss in the decay rates of $(U^2_{\neq},\Theta_{\neq})$ in \eqref{est-idl} is caused by the nonlinear growth from $u^{3,lo}_{\neq}\partial_z(u^{2}_0,\theta_0)^{hi}$, which must be offset by additional decay, see \eqref{ineq-t41} for details. On the other hand, the logarithmic loss for $(U^1_{\neq},U^3_{\neq})$ is related to the treatment of the high-regularity estimate of $u^1_0$, as explained in the main idea of proof below.
				
		The optimal nonlinear stability estimates \eqref{est-id} can be recovered by combining the good unknown introduced in \cite{CDW24} with the method developed in the present paper, at the expense of a smaller size assumption on the initial data. More precisely, one has the following result.
		\begin{proposition}
			Assume $\beta>1/2$, $0<\mu,\kappa\leq1$ and $C_0\mu\leq\kappa\leq C_0\mu$ for some $C_0\geq1$. Let $(u_{in},\theta_{in})$ be the initial datum of \eqref{eq-main}. For any $0<\delta\ll1$ and $s\geq4$, there exists $\epsilon_0=\epsilon_0(\beta,s,\delta,C_0)>0$ such that if it holds that
			\begin{equation}
				\| (u_{in},\theta_{in})\|_{H^{s+2}}+\|(u_{0,in},\theta_{0,in})\|_{W^{s+1,1}}=\epsilon\leq\epsilon_0\nu^{\frac{3}{4}+\delta},
			\end{equation}
			then the system \eqref{eq-main} admits a unique global solution, and the stability estimate \eqref{est-id} holds.
		\end{proposition}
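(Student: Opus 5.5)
The plan is to run the same bootstrap scheme as in Theorem \ref{mainthe}, on the profile $(U,\Theta)(t,x+yt,y,z)$ with the Fourier multiplier weights used there ($e^{a_1\nu^{1/3}t}$, ghost-type multipliers, the $\wtb$ weight on zero modes). Two changes are needed. First, for the non-zero modes we replace $(U^2_{\neq},\Theta_{\neq})$ by the symmetrized good unknowns of \cite{CDW24}. These diagonalize the linear gravity-wave coupling in the stable regime $\beta>1/2$, and their energy carries the sharp time weights $\langle t\rangle^{3/2}$ for $U^2_{\neq}$ and $\langle t\rangle^{1/2}$ for $\Theta_{\neq}$, where $\langle t\rangle$ is replaced by the multiplier $\langle \xi/k-t\rangle$ on the Fourier side. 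This reproduces the linear estimate \eqref{est-id} of \cite{CD23}. Second, we drop the logarithmic loss for $(U^1_{\neq},U^3_{\neq})$ by closing the high-regularity bound for $U^1_0$ without the $\nu^{-1/6}$ amplification. The cost is one extra derivative, which is why $H^{s+2}$ and $W^{s+1,1}$ appear, together with the smaller threshold $\nu^{3/4+\delta}$.

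The steps, in order:
\begin{enumerate}
\item Set up bootstrap hypotheses. These are \eqref{est-id}-type bounds in $L^\infty H^s$ with the good unknowns, the $L^2$-in-time enhanced dissipation bounds with $\nu^{1/6}$ gain, the zero-mode bounds as in \eqref{main-u10} but with $\|U^1_0\|_{L^\infty H^{s+1}}\lesssim\epsilon$ (no amplification), and the Strichartz bounds \eqref{main-u230} for $(U^2_0,U^3_0,\Theta_0)$. The Strichartz bounds are taken from the dispersive analysis of the zero-mode linearized system, already established in the paper for Theorem \ref{mainthe} and simply reused.
\item Close the linear parts of the energy estimates with the good unknown. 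We check that the symmetrizer commutes up to harmless lower-order terms with the dissipation when $\kappa\neq\mu$. Comparability $C_0^{-1}\mu\le\kappa\le C_0\mu$ ensures the cross term $(\kappa-\mu)\Delta_L$ is controlled by the enhanced dissipation, losing only a constant.
\item Estimate the nonlinear terms by splitting them into nonzero--nonzero, zero--nonzero and nonzero--zero interactions.
\end{enumerate}

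For the nonzero--nonzero interactions, the $e^{a_1\nu^{1/3}t}$ weight together with the time integrability $\|\cdot\|_{L^2}\sim\nu^{-1/6}$ gives bounds of size $\nu^{-1/2}\epsilon^2$ or smaller.

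The main obstacle is the term $u^{3}_{\neq}\partial_z(u^2_0,\theta_0)$ flagged in the remark around \eqref{ineq-t41}, now weighted by $\langle t\rangle^{3/2}$ instead of $\langle t\rangle$. Without stronger decay of $\partial_z(u^2_0,\theta_0)$, the extra $\langle t\rangle^{1/2}$ weight must be absorbed by enhanced dissipation on the timescale $\nu^{-1/3}$, which costs a factor $\nu^{-1/6}$. I would try to place the high-frequency factor $(u^2_0,\theta_0)^{hi}$ in the Strichartz norm $L^2W^{s-1,\infty}$, which costs $\nu^{-1/12-a_2}$. The low-frequency factor $u^3_{\neq}$ then goes in $L^\infty$ with time weight $\langle t\rangle^{1/2}e^{-a_1\nu^{1/3}t}\lesssim \nu^{-1/6}$. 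Adding the $\nu^{-1/6}$ from the dissipation pairing, the bound is $\nu^{-1/6-1/12-1/6-a_2}\epsilon^2\cdot\nu^{-1/3}$-type. Requiring it to be $\ll\epsilon$ gives $\epsilon\ll\nu^{3/4+\delta}$ once $a_2,a_0$ are absorbed into $\delta$. This exponent count is where the threshold $3/4$ should come from, and I would verify it first.

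The lift-up term $u^2_{\neq}\partial_y u^1_0$ and the $U^1_0$ equation forced by $(u_{\neq}\cdot\nabla u^1_{\neq})_0$ behave in the same way. The improved decay $\langle t\rangle^{-3/2}$ of $U^2_{\neq}$ makes this forcing integrable, with bound $\nu^{-3/4}\epsilon^2$ in $H^{s+1}$, using the extra derivative of the data. This closes $U^1_0$ without amplification and thereby removes the logarithms. A standard continuity argument then gives global existence and \eqref{est-id}.
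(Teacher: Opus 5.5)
The paper does not prove this proposition. It only remarks that it follows by combining the good unknown of \cite{CDW24} with the present method, at the price of smaller data, and your outline agrees with that. However, the two concrete mechanisms you supply do not work as written.

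The first is the derivation of $\frac34$. You put the high-frequency zero-mode factor $\partial_z(u^2_0,\theta_0)^{hi}$ in $L^2W^{s-1,\infty}$ at cost $\nu^{-1/12}$, while the sharp-decay bootstrap sits in $H^s$. At top order that factor carries all $s$ derivatives plus $\partial_z$; compare $\|\partial_ZU^2_0\|_{L^\infty H^{s+1}}$ in the paper's bound of $T^3_1(3,\neq,0)$. Proposition~\ref{prop-stri}, however, controls $\wtb(U^2_0,\Theta_0)$ with the $\nu^{-1/12-\delta}$ loss only in $L^2W^{s-2,\infty}$, see \eqref{est-2l}. The $W^{s-1,\infty}$ bound \eqref{est-3} is for $U^3_0$ alone, and one extra derivative on the data does not bridge a two-derivative gap. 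If you instead treat that piece as the paper treats $T^3_1(3,\neq,0)$, the extra $\langle t\rangle^{1/2}$ costs another $\nu^{-1/6}$ on top of $\nu^{-2/3}$. That gives $\nu^{-5/6}\epsilon^3$, which closes only for $\epsilon\ll\nu^{5/6}$. Your arithmetic $\nu^{-1/12-\delta}\cdot\nu^{-1/2}\cdot\nu^{-1/6}=\nu^{-3/4-\delta}$ (Strichartz, $\langle t\rangle^{3/2}$ against the exponential, one enhanced-dissipation $L^2_t$ factor) is legitimate only where the zero mode has derivatives to spare. Since \eqref{est-id} is an $L^2$ statement, the natural repair is a two-tier scheme. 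Keep the Theorem's bootstrap, with its weaker time weights, at top regularity. Propagate the sharp-weight unknowns of \cite{CDW24} only in a low norm with a slightly smaller exponential rate, where the zero-mode factor at any frequency can be placed in $L^2_tW^{k,\infty}$ with $k\le s-2$.

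The second is the $U^1_0$ step. In the paper the $\nu^{-1/6}$ amplification of $V^1_0$ in $H^{s+1}$ comes from zero--zero interactions: $u^{3,hi}_0\partial_zv^{1,lo}_0$ and the commutator in $\mathcal{NL}^h_0$, bounded by $\|u^3_0\|_{L^\infty H^{s+1}}\|\nabla v^1_0\|^2_{L^2H^s}\lesssim\nu^{-1}\epsilon^3$. No nonzero mode enters, so the decay of $U^2_{\neq}$ is irrelevant, and at $\epsilon\sim\nu^{3/4}$ this bound still allows growth $\nu^{-1/8}$. Likewise $(U^1_{\neq}\partial_XU^1_{\neq})_0$ and $(U^3_{\neq}\partial_ZU^1_{\neq})_0$ contain no $U^2_{\neq}$. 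Avoiding the amplification would require lifting \eqref{est-3} by one derivative and redoing $T^2_1$--$T^2_4$, including the time integration by parts through the $\Theta_0$ equation, at level $H^{s+1}$.

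Moreover, the logarithm in \eqref{bs-w} enters through the linear term $LP$ via \eqref{ineq-m31}. That term is a coupling of $W$ to $m^{1/2}G$ of size $\langle t-\eta/k\rangle^{-1}$, which is not time-integrable when $m^{1/2}G$ is merely bounded. What you must show is that the extra $\langle t-\eta/k\rangle^{1/2}$ carried by the sharp-decay unknown for $U^2_{\neq}$ makes it integrable. Saying that un-amplifying $U^1_0$ ``thereby removes the logarithms'' does not touch this term.

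Finally, the cross term $(\kappa-\mu)\Delta_L$ is second order and cannot be absorbed by enhanced dissipation. It must be absorbed by the viscous dissipation itself (using $|\kappa-\mu|<\kappa+\mu$) or by a constant hierarchy, as the paper does for $\mathcal{L}$.
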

		  It remains open whether the optimal estimate \eqref{est-id} can be achieved under the condition $\gamma\leq\frac{2}{3}+\delta$, or whether the sharp stability threshold can be further identified.		 
	\end{remark}
    
    \begin{remark}
        For comparison, the linearized two-dimensional problem on $\mathbb{T}\times\mathbb{R}$ exhibits different decay regimes depending on the stratification strength. The estimates established in \cite{YL18} can be summarized as follows:
        \begin{enumerate}
            \item If $0<\beta^2<\frac{1}{4}$, then
            \begin{align*}
             \langle t\rangle^{\frac12}\|U^{x}_{\neq}(t)\|_{L^2}+\langle t\rangle^{\frac32} \|U^y(t)\|_{L^2}+\langle t\rangle^{\frac12}\|\Theta_{\neq}(t)\|_{L^2}\lesssim \langle t\rangle^{\sqrt{\frac14-\beta^2}}\epsilon;
        \end{align*}
        \item If $\beta^2=\frac{1}{4}$, then
        \begin{align*}
            \langle t\rangle^{\frac12}\|U^{x}_{\neq}(t)\|_{L^2}+\langle t\rangle^{\frac32} \|U^y(t)\|_{L^2}+\langle t\rangle^{\frac12}\|\Theta_{\neq}(t)\|_{L^2}\lesssim \langle\log\langle t\rangle\rangle\epsilon;
        \end{align*}
        \item If $\beta^2>\frac{1}{4}$, then
        \begin{align*}
            \langle t\rangle^{\frac12}\|U^{x}_{\neq}(t)\|_{L^2}+\langle t\rangle^{\frac32} \|U^y(t)\|_{L^2}+\langle t\rangle^{\frac12}\|\Theta_{\neq}(t)\|_{L^2}\lesssim \epsilon.
        \end{align*}
        \end{enumerate}
        The \(\langle t\rangle^{-1}\) decay obtained for \(U^2_{\neq}\) in our 3D result coincides with the limiting decay exponent for $U^y$ in the 2D linear estimate as \(\beta\to0^+\). For each \(\beta>0\), however, the 2D estimate gives a stronger polynomial decay power. It remains open to determine whether the 3D inviscid damping rates improve with \(\beta\), and how the nonlinear stability threshold depends on \(\beta\).
    \end{remark}         
	\noindent\textbf{Main idea of proof:} The zero-mode components $(u^2_0,u^3_0,\theta_0)$ possesses a  dispersive structure. Following the ideas of \cite{LSWZ25} for $\kappa=\mu$, we establish a similar Strichartz-type estimate \eqref{main-u230} for $\kappa\approx\mu$ (see Lemma \ref{lem-stri} for details). In contrast, the first component $u^1_0$ is governed essentially by a heat equation. To exploit this structure, we use the quantity $v^1_0=u^1_0+\beta^{-1}\theta_0$, which satisfies  
	\begin{equation*}
		\partial_t v^1_0+(u\cdot v^1)_0=\nu\Delta v^1_0,
	\end{equation*}
    where we set $\mu=\kappa$ for simplify. Since the second component of velocity $u_0^2$ losses one more derivative than $u_0^3$ in the Strichartz-type estimates (see Proposition \ref{prop-stri}), the leading nonlinear contribution comes from the interaction of zero-mode
	\begin{align*}
		\int_{0}^{T}\left\langle u^{2,hi}_0\partial_y v^{1,lo}_0, v^1_0\right\rangle_{H^s} \d t,
	\end{align*}
    while the corresponding term $\int_{0}^{T}\left\langle u^{3,hi}_0\partial_z v^{1,lo}_0, v^1_0\right\rangle_{H^s} \d t$ for $u_0^{3,hi}$ can be resolved by integration by parts (see the estimate of $T^2_3$). 
	Using the equation
    \begin{equation*}
		u^2_0=\partial_t \theta_0+(u\cdot\nabla\theta)_0-\nu \Delta \theta_0.
	\end{equation*}
	and integration by parts in time, we can transform the above leading interaction into
	\begin{align*}
		\int_{0}^{T}\left\langle \theta^{hi}_0\partial_y v^{1,lo}_0, (u_0\cdot \nabla v^1_0)\right\rangle_{H^s}dt+\cdots,
	\end{align*}
	which can be controlled within the bootstrap framework.	
	
	At the higher regularity level, the nonlinear estimates lead to the additional growth. Indeed, the leading contribution in the $H^{s+1}$ energy estimate for $v^1_0$ is of the form
	\begin{align*}
		\int_{0}^{T}\left\langle \nabla (u^{3,hi}_0\partial_z v^{1,lo}_0), \nabla v^1_0\right\rangle_{H^s}dt
		\lesssim\|u^3_0\|_{L^\infty H^{s+1}}\|\nabla v^1_0\|^2_{L^2H^s}\lesssim\nu^{-1}\epsilon^3.
	\end{align*}
	For initial datum of size $\epsilon\sim\nu^{\frac{2}{3}+}$, this indicates a growth of order $\nu^{-\frac{1}{6}}$ for the $H^{s+1}$ norm of $v^1_{0}$. For even higher regularity, the corresponding estimates would lead to a more severe loss. 
    In particular, the interaction $u^{3,lo}_{\neq}\partial_{zzy}v^{1,hi}_0$, which appearing in the equation for $\partial_{zy}v^1_{\neq}$, becomes problematic. 
	To avoid such a direct high-regularity estimate of $U^1_{\neq}$, we exploit the incompressibility  condition and introduce the good unknown
	\begin{align*}
		W_{\neq}=(pq)^{\frac{1}{2}}U^3_{\neq}-\partial_{YZ}^L{(pq)^{-\frac{1}{2}}}G_{\neq},
	\end{align*}
	see \eqref{def-cot} and \eqref{def-w} for the precise definition and corresponding equation. The energy estimate for $W_{\neq}$ incurs only a mild logarithmic loss of order $|\ln\nu|^{1+a_0}$.
	
    
    \section{Preliminary}
    \subsection{Notation}
    Throughout this paper, for $r,s \in\mathbb{R}$, we define
	\begin{equation*}
		|r,  s |:=\sqrt{r^2+s^2},\quad\langle r \rangle:=\sqrt{1+|r|^2}.
	\end{equation*}
	We use the notation $r\lesssim s$ to express $r\leq Cs$ for some constant $C>0$ independent of the parameters of interest such as $\nu,\mu$. The Fourier transform of a function $f$ is denoted by
	\begin{equation*}
		\mathcal{F}(f)(k,\eta,l)=\hat{f}_{k,\eta,l}=\hat{f}(k,\eta,l)=\int_{\mathbb{T}\times\mathbb{R}^2}e^{-i(kx+\eta y+lz)}f(x,y,z)\mathrm{d}x\mathrm{d}y\mathrm{d}z.
	\end{equation*}
	For a Fourier multiplier $A$, we define $Af=\mathcal{F}^{-1}(A(k,\eta,l)\hat{f}(k,\eta,l))$. 
	
	For a function of space and time $f(t,x,y,z)$ defined on a time interval $(a,b)$, we denote the space $L^p(a,b;H^s)$ for $1\leq p\leq\infty$ by the norm
	\begin{equation*}
		\|f\|_{L^p(a,b;H^s)}:=\|\|f\|_{H^s}\|_{L^p(a,b)}.
	\end{equation*}
	When the time interval is clear from context or mentioned explicitly elsewhere, we use the shorthand notation $\|f\|_{L^p(a,b;H^s)}=\|f\|_{L^p H^s}$.
    \subsection{Reformulation of the systems}
	To mod out the fast mixing of Couette flow, we take the following coordinate transformations:
	\begin{equation*}
		X=x-yt,\ Y=y,\ Z=z,
	\end{equation*}
    and introduce the notations
    \begin{align}
        \nabla_L:=(\partial_X,\partial_Y^L,\partial_Z)=(\partial_X,\partial_Y-t\partial_X,\partial_Z),\quad \Delta_L=\nabla_L\cdot\nabla_L.\label{def-cot}
    \end{align}
    We denote the symbol associted to $-\Delta_L,-\Delta_{X,Z}$ by
    \begin{align*}
        p=k^2+(\eta-kt)^2+l^2, \quad q=k^2+l^2,
    \end{align*}
    respectively. For a function $\phi(t,x,y,z)$, we define the corresponding quantity $\Psi(t,X,Y,Z)=\psi(t,x,y,z)$ in the new frame. In particularly, we set $(U,\Theta)(t,X,Y,Z)=(u,\theta)(t,x,y,z)$. The system \eqref{eq-main} is then rewritten as follows
	\begin{equation*}
		\left\{
		\begin{aligned}
			&\partial_t U+(U^2,0,0)^T+(U\cdot\nabla_L)U+\nabla_L P^L+\nabla_LP^{NL}=\kappa \Delta_L U-\beta(0,\Theta,0)^T,\\
			&\partial_t \Theta-\beta U^2+(U\cdot\nabla_L)\Theta=\mu\Delta_L\Theta,\\
            &\Delta_LP^L=-2\partial_XU^2-\beta\partial_Y^L\Theta,\quad \Delta_LP^{NL}=-\partial_i^LU^j\partial_j^LU^i.
		\end{aligned}
		\right.
	\end{equation*}
	
	For nonzero modes, we consider the coupled system
	\begin{align}
		(G_{\neq},\ F_{\neq})=(pU^2_{\neq},(pq)^{\frac{1}{2}}\Theta_{\neq}),\ 
	\end{align}
	which satisfies
	\begin{equation}
		\left\{\begin{aligned}
			&\partial_t G+\beta\left(\frac{q }{p}\right)^{\frac{1}{2}}F-\kappa \Delta_L G=-p(U\cdot\nabla_LU^2)-\partial_Y^L(\partial_i^LU^j\partial_j^LU^i),\\
			&\partial_t F-\frac{1}{2}\frac{\partial_t p}{p}F-\beta\left(\frac{q }{p}\right)^{\frac{1}{2}}G -\mu \Delta_L F=-(pq)^{\frac{1}{2}}(U\cdot\nabla_L\Theta).
		\end{aligned}\right.
	\end{equation}
    To weaken the linear pressure effect, we introduce the good unknown
    \begin{align}
        W_{\neq}=\left((pq)^{\frac{
				1}{2}}U^3_{\neq}-\frac{\partial_{YZ}^L}{(pq)^{\frac{1}{2}}}G_{\neq}\right).\label{def-w}
    \end{align}
    Then the equation of $W_{\neq}$ reads
    \begin{align}
        &\partial_t W-\frac{1}{2}\frac{\partial_t p}{p}W+\frac{2\partial_{XZ}q^{\frac{1}{2}}}{p^{\frac{3}{2}}}G -\frac{\partial_{XZ}}{(pq)^{\frac{1}{2}}}G-\frac{(\partial_tp)\partial_{YZ}^L}{p^{\frac{3}{2}}q^{\frac{1}{2}}}G -\kappa\Delta_LW=\mathcal{NL}_{W},
    \end{align}
	where we denote
	\begin{equation}\label{eq-w}
		\begin{aligned}
			\mathcal{NL}_{W}=&-(pq)^{\frac{1}{2}}(U\cdot\nabla_LU^3)+\frac{\partial_{YZ}^L}{(pq)^{\frac{1}{2}}}p(U\cdot\nabla_LU^2)\\
			&-\partial_Z(\frac{q}{p})^{\frac{1}{2}}(\partial_i^LU^j\partial_j^LU^i)+\partial_Y^L\frac{\partial_{YZ}^L}{(pq)^{\frac{1}{2}}}(\partial_i^LU^j\partial_j^LU^i).
		\end{aligned}
	\end{equation}
	Moreover, by using the definition of $(G,F,W)$ and divergence-free condition, the nonzero modes of $(U_{\neq},\Theta_{\neq})$ can be explicitly expressed as:
	\begin{align*}
		&\partial_XU^1_{\neq}=-\partial_Y^LU^2_{\neq}-\partial_ZU^3_{\neq},\\
		&U^2_{\neq}=p^{-1}G_{\neq},\\
		&U^3_{\neq}=(pq)^{-\frac{1}{2}}\left(W_{\neq}+\frac{\partial_{YZ}^L}{(pq)^{\frac{1}{2}}}G_{\neq}\right),\\
		&\Theta_{\neq}=(pq)^{-\frac{1}{2}}F_{\neq}.
 	\end{align*}
	For zero-mode, to capture the dispersive property of $(U^2_0,U^3_0,\Theta_0)$, we introduce the quantity
	\begin{align*}
		\quad p_0^{-\frac{1}{2}}\Omega^1_0=p_0^{-\frac{1}{2}}(\partial_YU^3_0-\partial_ZU^2_0).
	\end{align*}
    Here $p_0=-\Delta_{y,z}$.	Then the system of $(\Theta_0,p_0^{-\frac{1}{2}}\Omega^1_0)$ satisfies
	\begin{equation}\label{eq-the0}
	    \left\{
        \begin{aligned}
            &\partial_t \Theta_0-\beta\partial_Zp_0^{-\frac{1}{2}}(p_0^{-\frac{1}{2}}\Omega^1_0)-\mu \Delta\Theta_0=-(U\cdot\nabla \Theta)_0,\\
		&\partial_t (p_0^{-\frac{1}{2}}\Omega^1_0)-\beta\partial_Zp_0^{-\frac{1}{2}}\Theta_0-\kappa \Delta(p_0^{-\frac{1}{2}}\Omega^1_0)=-p_0^{-\frac{1}{2}}[\partial_Y(U\cdot\nabla_LU^3)-\partial_Z(U\cdot\nabla_LU^2)]_0.
        \end{aligned}\right.
	\end{equation}
    To suppress the lift-up effect, we use the unknown
    \begin{align*}
        V^1_0=U^1_0+\beta^{-1}\Theta_0,
    \end{align*}
    which solve the equation
    \begin{align*}
		&\partial_t V^1_0-\kappa \Delta V^1_0=\beta^{-1}(\mu-\kappa)\Delta\Theta_0-(U\cdot\nabla_LV^1)_0.
	\end{align*}
	Then the zero-mode of velocity can be recovered by
	\begin{align}\label{eq-zero}
		&U^1_0=V^1_0-\beta^{-1}\Theta_0,\quad
		U^2_0=\partial_Zp_0^{-\frac{1}{2}}(p_0^{-\frac{1}{2}}\Omega^1_0),\quad
		U^3_0=-\partial_Yp_0^{-\frac{1}{2}}(p_0^{-\frac{1}{2}}\Omega^1_0).
	\end{align}

    \subsection{The Fourier multipliers}
	We introduce the following multipliers to capture the stability mechanisms of the above systems. The first two multipliers $M^1$ and $M^2$ are now standard for studying the stability of Couette flow. They are determined by the equations
	\begin{equation*}
		\left\{\begin{array}{l}
			\displaystyle\frac{\partial_tM_1}{M_1}=-\frac{k^2+|kl|}{k^2+(\eta-kt)^2+l^2},\ \mathrm{for}\ k\neq0,\\
			\displaystyle\frac{\partial_tM_2}{M_2}=-\frac{\nu^{1/3}k^2}{k^2+\nu^{2/3}(\eta-kt)^2},\ \mathrm{for}\ k\neq0,\\
			M_j(t,0,\eta,l)=M_j(0,k,\eta,l)=1,\ j=1,2.
		\end{array}\right.
	\end{equation*}
	The multiplier $M_1$ helps us to obtain inviscid damping and control terms arising from the linear pressure. The multiplier $M_2$ is used to capture the enhanced dissipation, which satisfies the key property:
	\begin{equation}\label{prom3}
		\nu^{1/3}\lesssim\nu(\eta-kt)^2-\frac{\partial_tM_2}{M_2}.
	\end{equation}
	The proofs of these properties are standard and can be found in \cite{BGM17}. 
	We also need a multiplier $M_3$ introduced in \cite{LSWZ25}, which helps us to control the nonlinear cascades and linear pressure, defined as
    \begin{equation*}
		\left\{
		\begin{aligned}
			&\frac{\partial_tM_3}{M_3}=-\sum_{n\in\mathbb{Z}\backslash\{0\}}\langle t-\frac{\eta}{k}\rangle^{-1}[\ln(e+\langle t-\frac{\eta}{n}\rangle )]^{-1-a_0}\langle k-n\rangle^{-1-a_0},\\
			&M_3(0,k,\eta,l)=1.
		\end{aligned}
		\right.
	\end{equation*}
    \color{blue}
    \color{black}
	for any $a_0>0$. The multiplier $M_3$ satisfies the following useful property (see \cite{LSWZ25}):
	\begin{equation}\label{prop-m3}
	     \begin{aligned}
		&M_3(t,k,\eta)\approx1,\\
        & \langle t-\frac{\eta}{k}\rangle^{-1}|\ln(1+\langle t-\eta/k\rangle)|^{-1-a_0} \lesssim \frac{ \partial_t M_3}{M_3} (t,k,\eta),\\
        & \langle t-\frac{\eta}{k}\rangle^{-1}|\ln(1+\langle t-\eta/k\rangle)|^{-1-a_0} \lesssim \frac{ \partial_t M_3}{M_3} (t,k_1,\eta_1) \langle k-k_1, \eta-\eta_1 \rangle ^{2+a_0(2+a_0)},
	\end{aligned}
	\end{equation}
	for $k\neq 0$.
    To control the linear stretching term, we introduce a multiplier $m$, defined as
	\begin{equation*}
		\frac{\partial_t m}{m}=\left\{\begin{aligned}
			&\frac{2k(\eta-kt)}{k^2+(\eta-kt)^2+l^2},\quad if\ 0\leq t-\eta/k\leq4\nu^{-1/3},\\
			&0,\quad otherwise,
		\end{aligned}\right.
	\end{equation*}
	which satisfies the following properties (see \cite{BGM17}):
	\begin{subequations}\label{propm}
			\begin{align}
			&\max\left\{\langle t\rangle^{-2},\nu^{2/3}\right\}\lesssim m\leq 1;\label{prop1m}\\
			& k^2+l^2\lesssim p m;\label{prop2m}\\
			& m(k,\eta,l)\lesssim \langle\eta-\eta_1,l-l_1\rangle^2m(k,\eta_1,l_1),\label{prop3m} \\
            & \left|\frac{\partial_t p}{2p}\right| \leqslant -\frac{\partial_t m}{2m} + \frac{\nu}{16} p. 
		\end{align}
    \end{subequations}
    Finally, we define the combined multiplier $M$ as
	\begin{align*}
		&M(t,k,\eta,l)=
			e^{a_1\mathbf{1}_{k\neq0}\nu^{1/3}t}(M_1M_2M_3)(t,k,\eta,l),
	\end{align*}
	where $a_1=\frac{1}{100}$ is a sufficiently small constant.
    \subsection{Bootstrap argument}
    To prove Theorem \ref{mainthe}, we employ a bootstrap argument. Henceforth, all time norms are taken over the interval $[0,T]$ unless otherwise specified.
    We define
\begin{align*}
        \|f\|_{A^s} &:=\|f\|_{L^{\infty}H^s}+\nu^{\frac{1}{2}}\|\nabla_L f\|_{L^2H^s}
                 +\sum_{i=1,2,3}\left\|\sqrt{-\partial_tM_i/M_i}f\right\|_{L^2H^s}.
\end{align*}
    \textbf{Bootstrap hypotheses:} Fix a large constant $C_0$ (to be determined later) and let $T>0$ be the maximal time such that the following estimates hold on $[0,T]$:
    \begin{subequations}\label{bh}
        \begin{align}
    	&\|M\wtb V^1_0\|_{A^s}\leq10C_0\epsilon,\label{bs-v1}\\
    	&\|M\nabla V^1_0\|_{A^{s}}\leq10C_0\nu^{-1/6}\epsilon,\label{bs-v1h}\\
    	&\|M\wtb(U^2_0,U^3_0,\Theta_0)\|_{A^{s+1}}\leq10\epsilon,\label{bs-u20}\\
    	&\|m^{\frac{1}{2}}M(G_{\neq},F_{\neq})\|_{A^s}\leq 10\epsilon,\label{bs-gf}\\
    	&\|m^{\frac{1}{2}}MW_{\neq}\|_{A^s}\leq 10C_0 |\ln{\nu} |^{1+a_0}\epsilon.\label{bs-w}
    \end{align}
    \end{subequations}

    The next proposition shows that $T$ is in fact infinite.

\begin{proposition}\label{bsup1}
    Under the assumptions of Theorem~\ref{mainthe} and the bootstrap hypotheses \eqref{bh}, there exist constants $\epsilon_0>0$ such that if $\epsilon<\epsilon_0\nu^{\frac{2}{3}}|\ln\nu|^{-2-2a_0}$, then estimates \eqref{bs-v1}--\eqref{bs-w} hold on $[0,T]$ with all constants on the right‑hand side replaced by one half of their original values. By continuity this implies $T=+\infty$.
\end{proposition}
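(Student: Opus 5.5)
We close the five bootstrap hypotheses \eqref{bs-v1}--\eqref{bs-w} one at a time. The common tool is a weighted energy estimate: apply the multiplier $M$ (together with $m^{1/2}$ for the nonzero modes) to each equation and pair it with itself in $H^s$. By construction of $M_1,M_2,M_3$ and $m$, the time derivative of the multiplier produces the coercive terms $\|\sqrt{-\partial_tM_i/M_i}\,\cdot\|_{L^2H^s}$ appearing in the $A^s$ norm. Property \eqref{prom3} absorbs the factor $e^{a_1\nu^{1/3}t}$, and \eqref{propm} absorbs the stretching term $-\frac12\frac{\partial_tp}{p}$. The linear coupling terms $\beta(q/p)^{1/2}$ between $G$ and $F$ are antisymmetric and cancel, apart from an error of size $|\kappa-\mu|\lesssim C_1\nu$. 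This error is handled by the dissipation when $\kappa\approx\mu$. Each estimate then takes the form
\[
\|\cdot\|_{A^s}^2\le C\|\text{data}\|^2+C\,(\text{linear errors})+C\,(\text{nonlinear terms}),
\]
and we must show that the nonlinear terms are $\lesssim \epsilon^3\nu^{-2/3}|\ln\nu|^{2+2a_0}\times(\text{target})^2/\epsilon^2$. This is small precisely because $\epsilon\le\epsilon_0\nu^{2/3}|\ln\nu|^{-2-2a_0}$.

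\textbf{Linear steps.} For $(G,F)$ the only nontrivial linear term is $\partial_tp/p$, which is treated by $m$. For $W$, the linear forcing terms $\frac{\partial_{XZ}}{(pq)^{1/2}}G$ and $\frac{(\partial_tp)\partial^L_{YZ}}{p^{3/2}q^{1/2}}G$ are of the form $(q/p)^{1/2}G$ or $\langle t-\eta/k\rangle^{-1}G$. We bound them by Cauchy--Schwarz against $\sqrt{-\partial_tM_1/M_1}$ and $\sqrt{-\partial_tM_3/M_3}$ norms of $G$ and $W$. Because $M_3$ carries the weight $[\ln(e+\langle t-\eta/n\rangle)]^{-1-a_0}$, the time integral of $\langle t-\eta/k\rangle^{-1}$ costs a factor $|\ln\nu|^{1+a_0}$ up to the enhanced-dissipation time $\nu^{-1/3}$. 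This is exactly why \eqref{bs-w} carries the loss $|\ln\nu|^{1+a_0}$ relative to \eqref{bs-gf}, and why $C_0$ is chosen large. For the zero mode $(\Theta_0,p_0^{-1/2}\Omega^1_0)$ the system \eqref{eq-the0} is skew, so \eqref{bs-u20} reduces to its nonlinear terms. For $V^1_0$ the term $\beta^{-1}(\mu-\kappa)\Delta\Theta_0$ is controlled by $\nu^{1/2}\|\nabla\Theta_0\|_{L^2H^{s+1}}$ from \eqref{bs-u20}.

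\textbf{Nonlinear steps.} We decompose each product into zero/nonzero interactions and into high/low paraproducts.
\begin{itemize}
\item Nonzero--nonzero interactions: use enhanced dissipation. The $L^2_t$ bounds carry $\nu^{-1/6}$ each, and the exponential weight gives $\nu^{-1/3}$ in $L^1_t$, so these terms cost $\epsilon^3\nu^{-2/3}$ with logarithmic losses.
\item Zero--nonzero interactions: we need the transport terms $U_0\cdot\nabla U_{\neq}$. The dangerous case is $U^1_0\partial_X$ at high regularity, where the $\nu^{-1/6}$-weighted \eqref{bs-v1h} enters. The key simplification is that the good unknown $W$ removes $U^1_{\neq}$ from the high-regularity estimate, since $\partial_XU^1_{\neq}$ is expressed through $U^2,U^3$. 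The term $u^{3,lo}_{\neq}\partial_z(u^2_0,\theta_0)^{hi}$ is paid for by the weaker decay encoded in $m$.
\item Zero--zero interactions in the $V^1_0$ equation: here the Strichartz estimate of Lemma~\ref{lem-stri} / Proposition~\ref{prop-stri}, i.e.\ \eqref{main-u230}, provides $L^2_tW^{s-2,\infty}$ control of $U^2_0,U^3_0$. This is proved in parallel with the bootstrap from \eqref{bs-u20}, with the small losses $\nu^{-1/12-a_2}$ and $\nu^{-a_2}$.
\end{itemize}

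\textbf{Main obstacle.} The hardest step is \eqref{bs-v1}, specifically the term $\int_0^T\langle u^{2,hi}_0\partial_Yv^{1,lo}_0,v^1_0\rangle_{H^s}\,dt$. The Strichartz bound for $U^2_0$ loses a derivative, so a direct estimate fails. The plan is to substitute $U^2_0=\beta^{-1}\bigl(\partial_t\Theta_0+(U\cdot\nabla\Theta)_0-\mu\Delta\Theta_0\bigr)$ and integrate by parts in time. This produces boundary terms bounded by $\epsilon^3$, together with terms in which $\partial_t$ falls on $V^1_0$; these are replaced using the $V^1_0$ equation, leaving cubic and dissipative terms. The resulting quartic terms $\langle\Theta^{hi}_0\partial_YV^{1,lo}_0,(U_0\cdot\nabla V^1_0)\rangle_{H^s}$ are then closed using \eqref{bs-v1h} and the Strichartz bounds. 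The analogous $u^3_0$ term is handled by integrating by parts in $Z$.

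For \eqref{bs-v1h}, we accept the bound $\|u^3_0\|_{L^\infty H^{s+1}}\|\nabla V^1_0\|_{L^2H^s}^2\lesssim\nu^{-1}\epsilon^3\le C\nu^{-1/3}\epsilon^2\cdot\epsilon_0$. This is consistent with the $\nu^{-1/6}$ amplification built into the hypothesis.

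Collecting all estimates, each right-hand side is at most $C\epsilon+C\epsilon_0(\cdots)$. Choosing $C_0$ large and then $\epsilon_0$ small halves the constants, and continuity yields $T=\infty$.
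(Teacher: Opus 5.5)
Your outline follows the paper's proof essentially step for step: weighted energy estimates with $m^{1/2}M$ for $(G_{\neq},F_{\neq})$ and $W_{\neq}$, where the $|\ln\nu|^{1+a_0}$ loss comes from bounding the $\langle t-\eta/k\rangle^{-1}$ pressure terms against $M_3$ via \eqref{ineq-m31} (hence $C_0$ large), the Strichartz bounds of Proposition~\ref{prop-stri} for the zero--zero interactions, the substitution $\beta U^2_0=\partial_t\Theta_0+(U\cdot\nabla_L\Theta)_0-\mu\Delta\Theta_0$ with integration by parts in time for the $U^{2,hi}_0\partial_Y V^{1,lo}_0$ term, and a direct $\nu^{-1}\epsilon^3$ bound at the $H^{s+1}$ level, which the $\nu^{-1/6}$ amplification in \eqref{bs-v1h} is built to absorb. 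One small correction: the $\beta(q/p)^{1/2}$ coupling between $G$ and $F$ cancels exactly in the energy identity even when $\kappa\neq\mu$, because the two dissipation terms are separate; the viscosity mismatch enters only through the linear term $\beta^{-1}(\mu-\kappa)\Delta\Theta_0$ in the $V^1_0$ equation and through the zero-mode dispersive estimates.
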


Finally, we derive several useful estimates that follow directly from the bootstrap hypotheses.
    \begin{lemma}\label{est-fun}
        Under the assumptions of Theorem \ref{mainthe}, it holds that
        \begin{align*}
        &\|q^{\frac{1}{2}}\partial_X U^1_{\neq}\|_{L^\infty H^s}+\nu^{\frac{1}{6}}\|q^{\frac{1}{2}}\partial_X U^1_{\neq}\|_{L^2 H^s}\lesssim\epsilon|\ln{\nu} |^{1+a_0},\\
		&\|(pq)^{\frac{1}{2}}U^2_{\neq}\|_{L^\infty H^s}+\|\partial_Xq^{\frac{1}{2}}U^2_{\neq}\|_{L^2 H^s}+\nu^{\frac{1}{6}}\|(pq)^{\frac{1}{2}}U^2_{\neq}\|_{L^2 H^s}\lesssim\epsilon,\\
		&\|q U^3_{\neq}\|_{L^\infty H^s}+\nu^{\frac{1}{6}}\|q U^3_{\neq}\|_{L^2 H^s}\lesssim\epsilon|\ln{\nu} |^{1+a_0},,\\
		&\|q\Theta_{\neq}\|_{L^\infty H^s}+\nu^{\frac{1}{6}}\|q\Theta_{\neq}\|_{L^2H^s}\lesssim\epsilon,\\
		&\|\wtb U^1_{0}\|_{L^\infty H^s}+\nu^{\frac{1}{6}}\|U^1_{0}\| _{L^\infty H^{s+1}}+\|\wtb (U^2_{0},U_0^3,\Theta_0) \|_{L^\infty H^{s+1}} \lesssim\epsilon.
	\end{align*}
    \end{lemma}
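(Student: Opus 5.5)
The plan is to read every estimate off the bootstrap hypotheses \eqref{bh}, using the explicit reconstruction formulas for $(U_{\neq},\Theta_{\neq})$ in terms of $(G,F,W)$ and for the zero modes in \eqref{eq-zero}, together with the multiplier properties. Since $M\approx e^{a_1\nu^{1/3}t}\mathbf 1_{k\neq0}$ up to constants ($M_1,M_2,M_3$ are bounded above and below), we may drop $M$. The weight $m^{1/2}$ is removed with \eqref{prop2m}: $q\lesssim pm$ gives $q^{1/2}\lesssim p^{1/2}m^{1/2}$ pointwise in Fourier space. The $L^2_t$ bounds with the factor $\nu^{1/6}$ come from the enhanced-dissipation part of the $A^s$ norm. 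By \eqref{prom3}, $\nu^{1/3}\lesssim \nu(\eta-kt)^2-\partial_tM_2/M_2$, so for $k\neq0$
\[
\nu^{\frac16}\|f\|_{L^2H^s}\lesssim \nu^{\frac12}\|\nabla_Lf\|_{L^2H^s}+\|\sqrt{-\partial_tM_2/M_2}f\|_{L^2H^s}\lesssim\|f\|_{A^s}.
\]

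For $\Theta_{\neq}$: $q\Theta=q^{1/2}p^{-1/2}F$, and $q^{1/2}p^{-1/2}\lesssim m^{1/2}$, so $\|q\Theta_{\neq}\|\lesssim\|m^{1/2}F\|$. Together with the previous step, \eqref{bs-gf} gives the fourth line.

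For $U^2_{\neq}$: $(pq)^{1/2}U^2=q^{1/2}p^{-1/2}G$ is handled in the same way. The extra term $\|\partial_Xq^{1/2}U^2\|_{L^2H^s}=\|kq^{1/2}p^{-1}G\|$ should instead come from the $M_1$ part of the $A^s$ norm. Since $-\partial_tM_1/M_1=(k^2+|kl|)/p\gtrsim k^2/p$, it suffices to check $k^2q/p^2\lesssim m\,k^2/p$, i.e. $q\lesssim pm$, which is again \eqref{prop2m}.

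For $U^3_{\neq}$: write $qU^3=q^{1/2}p^{-1/2}W+q^{1/2}p^{-1/2}\,\partial^L_{YZ}(pq)^{-1/2}G$. The symbol of the second operator acting on $G$ is bounded by $q^{1/2}p^{-1/2}\cdot |\eta-kt||l|/(pq)^{1/2}\lesssim q^{1/2}p^{-1/2}$, using $|\eta-kt|\le p^{1/2}$ and $|l|\le q^{1/2}$. Hence \eqref{bs-w} and \eqref{bs-gf} give the third line, with the loss $|\ln\nu|^{1+a_0}$.

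For $U^1_{\neq}$: use $\partial_XU^1=-\partial^L_YU^2-\partial_ZU^3$. The term $q^{1/2}\partial_ZU^3$ is bounded by $qU^3$. The term $q^{1/2}\partial_Y^LU^2$ has symbol $q^{1/2}|\eta-kt|p^{-1}|G|\le q^{1/2}p^{-1/2}|G|$, which is handled as before. This gives the first line.

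For the zero modes, \eqref{bs-u20} directly gives the $(U^2_0,U^3_0,\Theta_0)$ bound, since $M=M_1M_2M_3\approx1$ there. For $U^1_0=V^1_0-\beta^{-1}\Theta_0$, \eqref{bs-v1} and \eqref{bs-u20} bound $\langle\partial_Z\rangle U^1_0$ in $L^\infty H^s$. For the $\nu^{1/6}\|U^1_0\|_{L^\infty H^{s+1}}$ term, note $\|V^1_0\|_{H^{s+1}}\lesssim\|V^1_0\|_{H^s}+\|\nabla V^1_0\|_{H^s}$, and use \eqref{bs-v1h} with \eqref{bs-v1}. All constants depend on $C_0$, which is fixed.

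The only delicate point is the $M_1$-based $L^2_t$ bound for $\partial_Xq^{1/2}U^2_{\neq}$ without a $\nu^{1/6}$ factor, i.e. checking the symbol inequality $q\lesssim pm$ carefully. Everything else is pointwise symbol comparison.
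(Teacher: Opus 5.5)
Your proposal is correct and follows the paper's omitted argument: every bound is read off from the bootstrap hypotheses using the reconstruction formulas for $(U_{\neq},\Theta_{\neq})$, the inequality $q\lesssim pm$ from \eqref{prop2m}, and the divergence-free relation for $\partial_XU^1_{\neq}$. Your use of \eqref{prom3} for the $\nu^{1/6}$-weighted $L^2_t$ bounds, and of the $M_1$-dissipation term for $\|\partial_Xq^{1/2}U^2_{\neq}\|_{L^2H^s}$, simply spells out steps the paper leaves implicit.
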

    \begin{proof}
        Lemma \eqref{est-fun} follows from the bootstrap hypotheses, \eqref{prop2m} and the divergence free condition, we thus omit its proof. 
    \end{proof}
    It is worth noting that we do not obtain the estimates of $\|m^{\frac{1}{2}}\partial_Zp^{\frac{1}{2}}U^1_{\neq}\|_{H^s}$, since the use of divergence-free condition would loss one derivative.
	
	 
    
	\section{Estimates on zero mode}
	\subsection{Dispersive and Strichartz estimates on $(\Theta_0, p_0^{-\frac{1}{2}} \Omega^1_0)$}
	In this subsection, we establish the Strichartz estimates for zero-mode $(U^2_0,U^3_0,\Theta_0)$, which is stated in the following proposition.
    \begin{proposition}\label{prop-stri}
		Under the assumption of Theorem \eqref{mainthe} and bootstrap hypotheses \eqref{bh}, it holds that for $0<\delta<\frac{1}{12}$:
		\begin{align}
			~&\left\| \wtb (\Theta_0,U_0^{2,3}) \right \|_{L^2 W^{s-2,\infty}} \lesssim \nu^{-1/12-\delta}\epsilon, \label{est-2l}\\
			~& \left\| \wtb U_0^{3} \right \|_{L^2 W^{s-1,\infty}} + \nu^{1/2} \nm{\wtb \partial_Y (\Theta_0, U_0^2,U_0^3)}{L^1 W^{s-1,\infty}} \lesssim  \nu^{-\delta} \epsilon \label{est-3}.
		\end{align}
	\end{proposition}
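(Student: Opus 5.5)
We plan to diagonalize the linear part of \eqref{eq-the0} and use a dispersive estimate for the resulting half-wave-type propagators. Write $\Phi_\pm=\Theta_0\pm i\,p_0^{-\frac12}\Omega^1_0$. When $\kappa=\mu$, the system \eqref{eq-the0} decouples into
\[
\partial_t\Phi_\pm \mp i\beta\,\partial_Z p_0^{-\frac12}\Phi_\pm-\kappa\Delta\Phi_\pm=N_\pm .
\]
The symbol of the dispersive part is $\pm\beta l/|(\eta,l)|$, which is homogeneous of degree zero. For $\kappa\neq\mu$, we treat $(\mu-\kappa)\Delta$ as a perturbation, using $C_1^{-1}\mu\le\kappa\le C_1\mu$. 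Concretely, we take the diagonalizing matrix of the full linear symbol
\[
\begin{pmatrix}-\mu|\xi|^2 & i\beta l/|\xi|\\ i\beta l/|\xi| & -\kappa|\xi|^2\end{pmatrix}.
\]
Its eigenvalues are $-\tfrac{\kappa+\mu}{2}|\xi|^2\pm i\sqrt{\beta^2 l^2/|\xi|^2-(\mu-\kappa)^2|\xi|^4/4}$. This is a clean dispersive phase plus heat damping, except in the region $|\xi|^3\gtrsim \beta|l|/|\mu-\kappa|$. In that region the heat factor $e^{-\nu|\xi|^2 t}$ already gives strong decay, so we expect to handle it by direct $L^2$/parabolic bounds.

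\textbf{Step 1: localized dispersive decay.} Following \cite{LSWZ25}, we would prove a frequency-localized decay estimate of the form
\[
\|P_N e^{\pm it\beta\partial_Z p_0^{-1/2}}f\|_{L^\infty}\lesssim N^2\langle tN^{0}\rangle^{-1/2}\cdots\|f\|_{L^1}.
\]
The phase $l/|\xi|$ has a degenerate Hessian, so the decay rate is $t^{-1/2}$ in 2D. The proof is stationary phase on annuli, with an additional localization in angle. Combining this with the heat factor $e^{-\nu N^2 t}$ and a $TT^*$ argument, we obtain $L^2_tL^\infty$ bounds for the linear flow. Each dyadic block gains a factor $\nu^{-\alpha}N^{\gamma}$ from trading time decay for $\nu$ through
\[
\int_0^\infty \langle t\rangle^{-1}e^{-2\nu N^2t}\,dt\sim|\ln(\nu N^2)|.
\]
The $W^{s,1}$ hypothesis on $\wtb(u_{0,in},\theta_{0,in})$ in Theorem \ref{mainthe} supplies the $L^1$ data. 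The loss $\nu^{-\delta}$ absorbs logarithms and dyadic summation.

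\textbf{Step 2: recovering the velocity components.} By \eqref{eq-zero}, $U^3_0=-\partial_Yp_0^{-1/2}(\cdot)$ and $U^2_0=\partial_Zp_0^{-1/2}(\cdot)$. We expect the factor $l$ in $U^2_0$ to interact badly with the dispersion along the $l$-direction, because the phase is stationary where $\partial_\eta(l/|\xi|)$ vanishes, i.e. near $\eta=0$. This is why \eqref{est-2l} carries the extra loss $\nu^{-1/12}$ while $U^3_0$ in \eqref{est-3} does not: the $\eta$ factor of $U^3_0$ vanishes exactly on the bad set. To exploit this, we would split each dyadic block by angle, $|\eta|\lesssim \lambda|\xi|$ versus the complement. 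On the bad sector we use Bernstein/$L^2$ bounds, with the measure of the sector giving a gain. Optimizing $\lambda$ against the enhanced heat decay should produce the $\nu^{-1/12}$ loss.

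\textbf{Step 3: nonlinear terms.} We apply Duhamel and Minkowski in time, i.e. inhomogeneous Strichartz/Christ--Kiselev. The forcings are $(U\cdot\nabla\Theta)_0$ and $p_0^{-1/2}[\partial_Y(\cdot)-\partial_Z(\cdot)]_0$. We must bound them in $L^1_tW^{s,1}$ (or in a dual Strichartz norm), using \eqref{bh} and Lemma \ref{est-fun}. The products of nonzero modes decay like $e^{-a_1\nu^{1/3}t}$ and have $L^2_t$ integrability $\nu^{-1/6}$, so they contribute $\nu^{-1/3}\epsilon^2\cdot\log$. This is $\lesssim\epsilon$ by the size condition $\epsilon\lesssim\nu^{2/3}$. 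The zero-mode interactions $U_0\cdot\nabla(\cdot)_0$ are estimated in $L^1_tW^{s,1}$ by $\|\cdot\|_{L^2H^{s+1}}^2$; the parabolic $\nu^{1/2}$ gain from $A^{s+1}$ gives $\nu^{-1}\epsilon^2\lesssim\epsilon$. The term $\nu^{1/2}\|\partial_Y\cdot\|_{L^1W^{s-1,\infty}}$ in \eqref{est-3} follows by interpolating the heat smoothing, $\nu^{1/2}\partial_Y e^{\nu t\Delta}\sim t^{-1/2}$, against the dispersive decay.

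\textbf{Main obstacle.} The main difficulty is the degenerate stationary phase of $l/|\xi|$ combined with the non-commuting unequal diffusions. We would first try a careful angular decomposition and treat $(\mu-\kappa)$ perturbatively within each block, losing only $\nu^{-\delta}$.
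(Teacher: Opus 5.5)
Your linear analysis in Steps 1--2 is essentially Lemma \ref{lem-stri}: the two-branch propagator, the heat-dominated region where the eigenvalues become real, the angular localization near $\eta=0$, and the fact that the $\eta$-factor of $U^3_0$ vanishes at the stationary point. The gap is in Step 3, in the zero-mode self-interaction $U_0\cdot\nabla f_0$ with $f_0=(\Theta_0,U^2_0,U^3_0)$.

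You bound this term in $L^1_tW^{s,1}$ by two $L^2_tH^{s+1}$ norms and claim $\nu^{-1}\epsilon^2\lesssim\epsilon$. Under $\epsilon\lesssim\nu^{2/3}|\ln\nu|^{-2-2a_0}$ this is false, since $\nu^{-1}\epsilon^2\approx\nu^{-1/3}\epsilon$. You would get at best $\nu^{-1/3-\delta}\epsilon$ in both \eqref{est-2l} and \eqref{est-3}, which is useless downstream; for instance $T^2_3$ would then need $\epsilon\lesssim\nu^{5/6}$. The bound is not even available as stated. An $L^1_tL^1_x$ bound on $U_0\cdot\nabla f_0$ needs $L^2_t$ control of the undifferentiated factor $U_0$. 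At low frequencies on $\mathbb{R}^2$, the dissipative part $\nu^{1/2}\|\nabla\cdot\|_{L^2H^{s+1}}$ of the bootstrap gives no such control.

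The only source of time integrability for $U_0$ is the dispersion itself, so the Strichartz norms must be fed back into the nonlinear estimate. The paper uses
$\|\wtb(U_0\cdot\nabla f_0)\|_{L^1H^s}\lesssim\|\wtb f_0\|_{L^2L^\infty}\|\nabla\wtb f_0\|_{L^2H^s}\lesssim\epsilon\nu^{-1/2}\|\wtb f_0\|_{L^2L^\infty}$.
It combines this with the good bounds \eqref{est-d3} for $\partial_y|\nabla|^{-1}$ and $\partial_y$, which take $H^1$ data with losses of only $\nu^{-\delta}$ and $\nu^{-1/2-\delta}$ respectively. This gives \eqref{ineq-gc} up to a term $\epsilon\nu^{-1/2-\delta}\|\wtb f_0\|_{L^2L^\infty}$.

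This also shows that Step 2 puts the $\nu^{-1/12}$ in the wrong place. For $W^{s,1}$ data the linear flow loses only $\nu^{-\delta}$ for every component, because the decay $(1+t)^{-1/2}(\nu t)^{-\delta_1}$ in \eqref{est-d1} is square integrable in time. No sector optimization is needed, and zero modes have no enhanced dissipation to optimize against anyway.

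The loss actually comes from the Duhamel term. The forcing is controlled only in $L^1_tH^{s-1}$, not in $L^1_tW^{\cdot,1}$, so one must use the $TT^*$ bound \eqref{est-d2}, which loses $\nu^{-1/4}$. Split $U_0\cdot\nabla=U^2_0\partial_Y+U^3_0\partial_Z$ so that only the good quantities appear at top order. The forcing is then bounded by $\epsilon\nu^{-1/2}(\nu^{1/2}\|\wtb\partial_Y f_0\|_{L^1W^{s-1,\infty}}+\|\wtb U^3_0\|_{L^2W^{s-1,\infty}})\lesssim\nu^{-1/2-\delta}\epsilon^2$. Its contribution is therefore $\nu^{-3/4-\delta}\epsilon^2\lesssim\nu^{-1/12-\delta}\epsilon$.

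The exponent $1/12=3/4-2/3$ is set by the size threshold, not by the linear dispersion. The inequalities \eqref{ineq-dl} and \eqref{ineq-gc} are then closed together using smallness of $\epsilon\nu^{-2/3}$. Without this coupled argument between the Strichartz norms, your Step 3 does not close.
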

  	Let $\mathcal{R}_3=\partial_z(-\Delta_{y,z})^{-\frac{1}{2}}$. Consider the following linear system on $\mathbb{R}^2$:
  	\begin{equation}\label{eq-linear}
  		\left\{\begin{aligned}
  			&\partial_t \phi_1-\beta \mathcal{R}_3 \phi_2-\kappa\Delta \phi_1=0,\\
  			&\partial_t \phi_2-\beta \mathcal{R}_3 \phi_1-\mu\Delta \phi_2=0,\\
  			&(\phi_1,\phi_2)|_{t=0}=(\phi_{1,in},\phi_{2,in}).
  		\end{aligned}\\
  		         \right.
  	\end{equation}
  	Here $\beta>0$, $\kappa,\mu>0$, and $\kappa\approx\mu$. Let $S(t)$ be the solution operator of \eqref{eq-linear} and set
    \begin{align*}
        \xi=(\eta,l),\quad \sigma=\frac{l}{|\xi|},\quad \nu_1=\frac{\kappa+\mu}{2},\quad \nu_2=\frac{\mu-\kappa}{2}.
    \end{align*}
    Taking the Fourier transform of 
  	\eqref{eq-linear}, we obtain
  	\begin{equation*}
  		\partial_t \hat\Phi=M(\xi)\hat\Phi,
  	\end{equation*}
  	where we denote
  	\begin{equation*}
  		\Phi=(\phi_1,\phi_2)^{T},\quad M(\xi)=-\nu_1|\xi|^2I+N(\xi),\quad N(\xi)=\left(\begin{array}{cc}
  		     \nu_2|\xi|^2&i\beta\sigma\\
  		     i\beta\sigma&-\nu_2|\xi|^2
  		\end{array}\right).
  	\end{equation*}
  	Since $-\nu_1|\xi|^2I$ commutes with $N$, it holds that
  	\begin{align*}
  		e^{tM}=e^{-\nu_1|\xi|^2 t}e^{tN}.
  	\end{align*} 
  	Noting the fact that
  	\begin{equation*}
  		N(\xi)^2=(\nu_2^2|\xi|^4-\beta^2\sigma^2)I,
  	\end{equation*}
    then we have 
    \begin{equation}\label{eq-s}
        \hat{S}(t,\xi)=\left\{\begin{aligned}
            &e^{-\nu_1|\xi|^2 t}\left(\cos(t\psi)I+\frac{sin(t\psi)}{\psi}N\right),\quad \mathrm{for}\ |\beta\sigma|>|\nu_2||\xi|^2,\\
            &e^{-\nu_1|\xi|^2 t}\left(\cosh(t\psi)I+\frac{sinh(t\psi)}{\psi}N\right),\quad \mathrm{for}\ |\beta\sigma|\leq|\nu_2||\xi|^2,
        \end{aligned}\right.
    \end{equation}
    with
  	\begin{equation*}
  		\psi=|\beta^2\sigma^2-\nu_2^2|\xi|^4|^{\frac{1}{2}}.
  	\end{equation*}
    Recalling the equations \eqref{eq-the0}, the nonlinear solution can be expressed as 
    \begin{align}
        \varPhi(t)=S(t)\varPhi_{in}+\int_{0}^{t}S(t-\tau)\mathcal{NL}(\tau)d\tau,\label{eq-phi}
    \end{align}
    where we denote
    \begin{align*}
        &\varPhi=(\Theta,p_0^{-\frac{1}{2}}\Omega_0^1)^{\top},\\
        &\mathcal{NL}=-\left((U\cdot\nabla \Theta)_0,\ p_0^{-\frac{1}{2}}[\partial_Y(U\cdot\nabla_LU^3)-\partial_Z(U\cdot\nabla_LU^2)]_0\right)^{\top}.
    \end{align*}
    We summarize the properties of operator $S(t)$ in the following lemma. All vector-valued norms below are taken componentwise.
    \begin{lemma}\label{lem-stri}
        It holds that for any $F\in\mathcal{S}(\mathbb{R}^2)$ and any small constant $\delta_1>0$:
        \begin{align}
  			&\|S(t)F\|_{L^\infty}\lesssim(1+t)^{-\frac{1}{2}}(\nu t)^{-\delta_1}\|F\|_{W^{2,1}},\label{est-d1}\\
  			&\|S(t)F\|_{L^2L^\infty}\lesssim\nu^{-\frac{1}{4}}\|F\|_{H^{1/2+\epsilon}},\label{est-d2}\\
  			&\|S(t)\partial_y|\nabla_{y,z}|^{b-1}F\|_{L^2L^\infty}\lesssim\nu^{-(\delta_1+\frac{b}{2})}\|F\|_{H^1},\label{est-d3}
  		\end{align}
    where $b=0,1$.
    \end{lemma}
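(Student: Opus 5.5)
We work entirely on the Fourier side using the explicit formula \eqref{eq-s}, and split frequencies according to whether the oscillatory branch or the hyperbolic branch is active. Since $\kappa\approx\mu$, we have $|\nu_2|\lesssim\nu_1\approx\nu$.

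\textbf{Hyperbolic branch.} On the set $|\beta\sigma|\le|\nu_2||\xi|^2$ we have $\psi\le|\nu_2||\xi|^2$. Hence
\[
e^{-\nu_1|\xi|^2t}\cosh(t\psi)\le e^{-(\nu_1-|\nu_2|)|\xi|^2t}=e^{-\min\{\kappa,\mu\}|\xi|^2t}.
\]
The factor $\sinh(t\psi)/\psi\cdot N$ is bounded by $t\,|N|\,e^{t\psi}$. We have $|N|\lesssim\nu|\xi|^2+\beta|\sigma|\lesssim\nu|\xi|^2$ on this set. Therefore this branch behaves like a heat kernel with viscosity $\nu$, up to a harmless factor $(1+\nu|\xi|^2t)$.

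Its contribution to \eqref{est-d1} is bounded by $\int e^{-c\nu|\xi|^2t}|\hat F|\,d\xi$. On this set $|\sigma|\lesssim\nu|\xi|^2$, i.e.\ $|l|\lesssim\nu|\xi|^3$, which is a thin sector. This decays like $(\nu t)^{-1/2}$, or better, and is dominated by the right side of \eqref{est-d1} after sacrificing $(\nu t)^{-\delta_1}$. The same reasoning gives the $L^2_tL^\infty$ bounds via the $L^2_t$ integral of the heat factor, which produces $\nu^{-1/2}|\xi|^{-1}$ on the multiplier.

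\textbf{Oscillatory branch: dispersive estimate \eqref{est-d1}.} On this branch we write
\[
\cos(t\psi)=\tfrac12\left(e^{it\psi}+e^{-it\psi}\right),
\]
and similarly for the sine. The kernel then becomes
\[
\int e^{i(y\eta+zl)\pm it\psi(\xi)}e^{-\nu_1|\xi|^2t}a(\xi)\,d\xi
\]
with a bounded amplitude $a$, where we note $|N|/\psi\lesssim1$ away from the transition set.

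The phase $\psi=\beta|\sigma|\sqrt{1-\nu_2^2|\xi|^4/(\beta^2\sigma^2)}$ is a perturbation of the Rossby/internal-wave dispersion $\beta|l|/|\xi|$. That dispersion is homogeneous of degree $0$ and has rank-one Hessian away from $\sigma=0,\pm1$.

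We localize dyadically, $|\xi|\sim2^j$. In each piece we apply stationary phase in the radial-orthogonal (angular) variable, obtaining decay $(1+t)^{-1/2}$ at a cost of $2^{2j}$ in $L^1\to L^\infty$. This is where $W^{2,1}$ enters.

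The degeneracies near $\sigma=0$ and $|\sigma|=1$ and near the transition $\beta|\sigma|\approx|\nu_2||\xi|^2$ are handled as follows. In these small angular regions we use the trivial bound together with the heat factor $e^{-\nu|\xi|^2t}$. The measure of these regions, combined with $\sum_j e^{-\nu 2^{2j}t}$, costs at most $(\nu t)^{-\delta_1}$.

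\textbf{Strichartz estimates \eqref{est-d2}, \eqref{est-d3}.} These follow from \eqref{est-d1}, or from its frequency-localized version, by the standard $TT^*$ argument on each dyadic block.

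For \eqref{est-d2}, the decay $t^{-1/2}$ is not integrable. We therefore truncate in time using the dissipation. On block $j$, the effective time is at most $(\nu2^{2j})^{-1}$. The $TT^*$ kernel bound
\[
\int_0^{(\nu 2^{2j})^{-1}}\langle t\rangle^{-1/2}\,dt\lesssim\nu^{-1/2}2^{-j}
\]
gives $\|S P_jF\|_{L^2L^\infty}\lesssim\nu^{-1/4}2^{j/2}\|P_jF\|_{L^2}$ after Bernstein. Summing in $j$ with $\epsilon$ extra derivatives yields \eqref{est-d2}.

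For \eqref{est-d3} we exploit the extra symbol $\eta|\xi|^{b-1}$. This symbol vanishes where $\eta$ is small, i.e.\ near $|\sigma|=1$, which is exactly where the phase is degenerate. In the nondegenerate angular region, the oscillation in $\sigma$ gives better time integrability; combined with the heat cutoff, this yields the bound $\nu^{-b/2-\delta_1}$ with $H^1$ data.

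\textbf{Main obstacle.} The main difficulty is obtaining uniform stationary-phase bounds near the degenerate and transition angles, where the phase $\psi$ fails to be a smooth perturbation of $\beta\sigma$. At those angles we would first try trading the lost curvature against the heat factor $e^{-\nu|\xi|^2t}$, at the cost of $(\nu t)^{-\delta_1}$.
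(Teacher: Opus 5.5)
You have the paper's architecture: a block bound $\|S(t)P_jF\|_{L^\infty}\lesssim R^2(1+t)^{-1/2}e^{-c\nu R^2t}\|P_jF\|_{L^1}$, removal of the transition sector by the energy bound, and $TT^*$ with $\int_0^\infty e^{-c\nu R^2\tau}(1+\tau)^{-1/2}\,d\tau\lesssim\nu^{-1/2}R^{-1}$ for \eqref{est-d2}. For that sector the right rate comes from $\alpha_j=|\nu_2|R^2\lesssim\nu R^2$ (with $\alpha_j\le c_0$), giving $\alpha_je^{-c\nu R^2t}\lesssim(1+t)^{-1}$. A bare ``$(\nu t)^{-1/2}$'' would not be dominated by the right side of \eqref{est-d1}.

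The gap is in the oscillatory estimate, at the step you flag as the main obstacle. Near $|\sigma|=1$, where $\cos\theta=0$ and $\partial_\theta(t\sin\theta)$ vanishes, the heat factor gives no decay on blocks with $\nu R^2t\lesssim1$. Excising a fixed angular neighbourhood and bounding it trivially therefore yields only $R^2$ times its width, with no $(1+t)^{-1/2}$. Trading against $e^{-\nu|\xi|^2t}$ works only for the sector near $\sigma=0$, whose width is $O(\nu R^2)$. The paper instead excises $I_\delta=[\pi/2-\delta,\pi/2+\delta]$ with a time-dependent width and bounds it by $\delta R^2$. On the complement, where $|\cos\theta|\ge\delta/2$, it integrates by parts in $\theta$ against $e^{it\sin\theta}$ to get $R^2/(\delta t)$, then takes $\delta=t^{-1/2}$. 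The factor $(\nu t)^{-\delta_1}$ has nothing to do with degenerate angles. It enters only when summing blocks, through $R^2e^{-c\nu R^2t}\lesssim(\nu t)^{-\delta_1}R^{2-2\delta_1}$, which makes $\sum_{j\ge0}2^{(2-2\delta_1)j}\|P_jF\|_{L^1}\lesssim\|F\|_{W^{2,1}}$.

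Second, angular stationary phase at fixed $|\xi|$ is not uniform in $(y,z)$. With $(\eta,l)=Rr(\cos\theta,\sin\theta)$ and $(y,z)=R^{-1}\rho(\cos\varphi,\sin\varphi)$, the total angular phase is $r\rho\cos(\theta-\varphi)+t\sin\theta$. For $\varphi=-\pi/2$ and $r\rho=t$ this vanishes identically, so the decay must be extracted jointly with the radial integral. The paper integrates in $r$ first, forming $B(t,\theta,z)$, and proves
$\|B\|_{L^\infty}+\|\partial_\theta B\|_{L^1_\theta L^\infty_z}+\|\partial_zB\|_{L^1_zL^\infty_\theta}\lesssim e^{-c\nu R^2t}$:
\begin{itemize}
\item The $z$-bound comes from two integrations by parts in $r$.
\item The $\theta$-bound requires absorbing $t\,\partial_\theta\psi_1$, with $|\partial_\theta\psi_1|\lesssim\alpha_j^2|\cos\theta|/\sin^2\theta$, into the heat factor via $\alpha_j\lesssim\nu R^2$. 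So $\psi$ is \emph{not} a harmless perturbation of $\sin\theta$ for large $t$ without using dissipation.
\item The chain-rule term $\rho\sin(\theta-\varphi)\,\partial_zB$ from the $\theta$-integration by parts becomes $\int|\partial_zB|\,dz$ under $z=\rho\cos(\theta-\varphi)$, which removes the dependence on $\rho$.
\end{itemize}
Alternatively, van der Corput in $\theta$ at fixed $r$ gives decay $|r\rho e^{i\varphi}+it|^{-1/2}$, which integrates in $r$ to $t^{-1/2}$. Either way, this step is absent from your proposal.

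Your curvature picture is also off. The Hessian of $l/|\xi|$ has determinant $-\eta^2/|\xi|^6$, so it degenerates only at $|\sigma|=1$. The direction $\sigma=0$ is special only because $\psi=(\sin^2\theta-\alpha_j^2r^4)^{1/2}$ loses smoothness at the transition.
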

    The proof of this lemma is provided in the appendix.
	\begin{proof}[Proof of proposition \ref{prop-stri}]
	Applying Minkowski's inequality, Fubini's thoerem, and lemma \ref{lem-stri}, we have
    \begin{align*}
        &\left\| S(t) F\right \| _{L^2L^{\infty}} \lesssim \nu^{-\delta} \left \| F\right \| _{W^{2,1}},\\
		&\left\| S(t) \mathcal{R}_2 F \right \| _{L^2L^{\infty}} \lesssim \nu^{-\delta } \left \| F\right \| _{H^1},\\
		&\left\| S(t) \partial_Y F \right \| _{L^1 L^{\infty}} \lesssim \nu^{-1/2-\delta } \left \| F\right \| _{H^1},\\
		&\left\| \int_0^t S(t-s) H(s) \d s \right \| _{L^2L^{\infty}} \lesssim \nu^{-1/4} \left \| H\right \| _{L^1H^{\frac{1}{2}+}},\\
		&\left\| \int_0^t S(t-s) \mathcal{R}_2 H(s) \d s \right \| _{L^2L^{\infty}} \lesssim \nu^{-\delta} \left \| H\right \| _{L^1 H^1},\\
		&\left\| \int_0^t S(t-s) \partial_Y H(s) \d s \right \| _{L^1 L^{\infty}} \lesssim \nu^{-1/2-\delta} \left \| H\right \| _{L^1 H^1},
	\end{align*}
	In view of \eqref{eq-phi} and \eqref{eq-zero}. the above six estimates imply that
	\begin{equation}\label{ineq-dl}
		\begin{aligned}
			~&\left\|\wtb (\Theta_0,U_0^{2},U_0^{3}) \right \|_{L^2 W^{s-2,\infty}}\\
			\lesssim & \nu ^{-\delta} \left \| \wtb  (U_{0,in},\Theta_{0,in})  \right \| _{W^{s,1}}+\nu^{-1/4} \left \|  \wtb \left ( U_0\cdot \nabla (\Theta_0, U_0^2,U_0^3) \right ) \right \| _{L^1H^{s-1}} \\
			~& +\nu ^{-1/4} \left \|  \wtb  \left(U_{\neq}\cdot \nabla_L (\Theta_{\neq}, U_{\neq}^2, U_{\neq }^3)\right)_0 \right \|_{L^1H^{s-1}},
		\end{aligned}
	\end{equation}
	and
	\begin{equation}\label{ineq-gc}
	    \begin{aligned}
		~& \left\|\wtb  U_0^{3} \right \|_{L^2 W^{s-1,\infty}}+\nu^{1/2}\left\| \wtb \partial_Y (\Theta_0, U_0^2, U_0^{3}) \right \|_{L^1 W^{s-1,\infty}}\\
		\lesssim & \nu ^{-\delta} \left \|  \wtb  (U_{0,in},\Theta_{0,in}) \right \| _{H^s}+\nu^{-\delta} \left \| \wtb  \left ( U_0\cdot \nabla (\Theta_0, U_0^2,U_0^3) \right ) \right \| _{L^1H^s} \\
		~& +\nu ^{-\delta} \left \| \wtb  \left(U_{\neq}\cdot \nabla_L (\Theta_{\neq}, U_{\neq}^2, U_{\neq }^3)\right)_0 \right \|_{L^1H^s},
	\end{aligned}
	\end{equation}
	where we choose $\frac{1}{2}+=1$.

	First, we treat the good components estimates for $U^3_0$ and $\partial_Y(\Theta_0,U^2_0,U^3_0)$. By using divergence-free condition, Moser's type inequality, and bootstrap assumptions, we obtain that
	\begin{align*}
		~&\left \| \wtb ( U_0\cdot \nabla (\Theta_0, U_0^2,U_0^3)) \right \| _{L^1H^{s}}=\left \|\wtb ( \nabla \cdot \left( (U_0^2,U_0^3) \otimes (\Theta_0, U_0^2,U_0^3)\right) \right \| _{L^1H^{s}}\\
		\lesssim  & \nm{\wtb(\Theta_0, U_0^2,U_0^3)}{L^2 L^{\infty}} \nm{\nabla \wtb (\Theta_0, U_0^2,U_0^3)}{L^2 H^s} \\
		\lesssim & \epsilon \nu^{-1/2} \nm{ \wtb (\Theta_0, U_0^2,U_0^3)}{L^{2} L^{\infty}},
	\end{align*}
	and 
	\begin{align*}
		&\|\wtb (U_{\neq}\cdot\nabla_L(\Theta_{\neq},U^2_{\neq},U^3_{\neq}))\|_{L^1H^s}\\
		\lesssim&\|\wtb U^{1,3}_{\neq}\|_{L^2H^s}\| \wtb q (\Theta_{\neq},U^2_{\neq},U^3_{\neq})\|_{L^2H^s}+\|\wtb U^2_{\neq}\|_{L^2H^s}\|\nabla_L\wtb(\Theta_{\neq},U^2_{\neq},U^3_{\neq})\|_{L^2H^s}\\
        \lesssim & \nm{m^{\frac12} p^{\frac12} U_{\neq}^{1,3}}{L^2 H^{s}} \nm{ m^{\frac12} (G_{\neq}, F_{\neq}, W_{\neq })}{L^2 H^{s}}+ \nm{m^{\frac12} \partial_X p^{-\frac12} G_{\neq}}{L^2 H^s} \nm{\nabla_L m^{\frac12}  (G_{\neq}, F_{\neq}, W_{\neq })}{L^2 H^{s}} \\
		\lesssim& \nu^{-1/6-1/6}\epsilon^2 |\ln{\nu}|^{2+2a_0} +\nu^{-1/2}\epsilon^2 |\ln{\nu}|^{1+a_0} .
	\end{align*}
	Then it holds that
	\begin{align*}
		~& \left\| \wtb U_0^{3} \right \|_{L^2 W^{s-1,\infty}} + \nu^{1/2} \nm{\wtb \partial_Y (\Theta_0, U_0^2,U_0^3)}{L^1 W^{s-1,\infty}}  \\
		\lesssim &  \nu ^{-\delta} \left \|  ( \Theta_{0,in},U_{0,in}^{2,3})  \right \| _{H^s}+ \epsilon \nu^{-1/2-\delta} \nm{ (\Theta_0, U_0^2,U_0^3)}{L^{2} L^{\infty}} +\epsilon^2 \nu^{-1/2-\delta}  | \ln{\nu}|^{1+a_0}
	\end{align*}
	Turning to the estimates of low regularity, the above estimate implies that
	\begin{align*}
		~&\left \|\wtb ( U_0\cdot \nabla (\Theta_0, U_0^2,U_0^3)) \right \| _{L^1H^{s-1}}\\
		\lesssim  &\left \|\wtb( U_0^2 \partial_Y (\Theta_0, U_0^2,U_0^3)) \right \| _{L^1 H^{s-1}} +\left \|\wtb (U_0^3 \partial_Z (\Theta_0, U_0^2,U_0^3)) \right \| _{L^1 H^{s-1}}  \\
		\lesssim & \nm{\wtb U_0^2}{L^{\infty} H^{s-1}} \nm{\wtb \partial_Y (\Theta_0, U_0^2,U_0^3)}{L^1 W^{s-1,\infty}}\\
		& + \nm{\wtb U_0^3}{L^2 W^{s-1,\infty}} \nm{\partial_Z\wtb (\Theta_0, U_0^2,U_0^3)}{L^{2} H^{s-1}  } \\    
		\lesssim & \epsilon \nu^{-1/2} \left( \nu^{1/2} \nm{\wtb \partial_Y (\Theta_0, U_0^2,U_0^3)}{L^1 W^{s-1,\infty}} + \nm{\wtb U_0^3}{L^2 W^{s-1,\infty}}  \right) \\ 
		~\lesssim & \epsilon \nu^{-1/2} \left( \nu ^{-\delta_3} \left \| \wtb (\Theta_0,  U_{0,in}^{2,3})  \right \| _{H^s}+ \epsilon \nu^{-1/2-\delta} \nm{ \wtb (\Theta_0, U_0^{2,3})}{L^{2} L^{\infty}} +\epsilon^2 \nu^{-1/2-\delta} | \ln{\nu} |^{1+a_0} \right).  
	\end{align*}
	
    For the other nonlinear term, it follows from divergence-free condition that
    \begin{align*}
        ~ & \nm{\wtb \left( U_{\neq}\cdot  \nabla_L (\Theta_{\neq}, U_{\neq}^2, U_{\neq }^3) \right)_0}{L^1 H^{s-1}}\\
        = & \nm{\wtb \left( U_{\neq}\otimes (\Theta_{\neq}, U_{\neq}^2, U_{\neq }^3) \right)_0}{L^1 H^{s}}\\
        \lesssim &  \nm{ m^{1/2} p^{1/2} U_{\neq}}{L^2 H^{s}} \nm{m^{1/2} p^{1/2} (\Theta_{\neq}, U_{\neq}^2, U_{\neq }^3)}{L^2 H^{s}}  \\  
        \lesssim & \epsilon^2 \nu^{-1/6-1/6}|\ln{\nu}|^{2+2 a_0}.
    \end{align*}
	Substituting these two estimates into \eqref{ineq-dl}, we conclude that
	\begin{align*}
		~&\left\|  \wtb (\Theta_0,U_0^2,U_0^3) \right \|_{L^2 W^{s-2,\infty}}\\
		\lesssim & \nu ^{-\delta} \left \| (\Theta_0,  U_{0,in}^{2,3})  \right \| _{W^{s,1}}+\nu^{-3/4} \epsilon \left( \nu ^{-\delta_3} \epsilon+\epsilon\nu^{-1/2-\delta}\|\wtb (\Theta_0,U^{2,3}_0)\|_{L^2L^{\infty}}+\epsilon^2 \nu^{-1/2-\delta} |\ln{\nu}|^{1+a_0}  \right)  \\
        &+\nu^{-7/12} |\ln{\nu}|^{2+2a_0} \epsilon^2\\
		\lesssim & \epsilon \nu ^{-\delta} + \epsilon \nu^{-\frac23} \epsilon \nu^{-\frac{1}{12}-\delta_3} + (\epsilon \nu^{-\frac{5}{8}-\delta/2})^2 \|\wtb (\Theta_0,U^{2,3}_0)\|_{L^2L^{\infty}} + \epsilon^3 \nu^{-\frac{5}{4}-\delta}|\ln{\nu}|^{1+a_0} + \nu^{-\frac{7}{12}} |\ln{\nu}|^{2+2a_0} \epsilon^2
	\end{align*}
    Moreover, we deduce that
    \begin{align*}
		~& \left\| \wtb U_0^{3} \right \|_{L^2 W^{s-1,\infty}} + \nu^{1/2} \nm{\wtb \partial_Y (\Theta_0, U_0^2,U_0^3)}{L^1 W^{s-1,\infty}}  \\
		\lesssim &  \nu ^{-\delta_3} \left \|  ( \Theta_{0,in},U_{0,in}^{2,3})  \right \| _{H^s}+ \epsilon \nu^{-1/2-\delta} \nm{ (\Theta_0, U_0^2,U_0^3)}{L^{2} L^{\infty}} +\epsilon^2 \nu^{-1/2-\delta}  | \ln{\nu}|^{1+a_0}.
	\end{align*}
	Choosing $0<\delta < \frac{1}{12}$ sufficiently small, it follows from $\epsilon \ll \nu^{2/3}$ that 
	\begin{align*}
		~&\left\| \wtb (\Theta_0,U_0^{2,3}) \right \|_{L^2 W^{s-2,\infty}} \lesssim \nu^{-1/12 -\delta}  \epsilon, \\
		~& \left\| \wtb U_0^{3} \right \|_{L^2 W^{s-1,\infty}} + \nu^{1/2} \nm{\wtb \partial_Y (\Theta_0, U_0^2,U_0^3)}{L^1 W^{s-1,\infty}}\\
        \lesssim &  \nu ^{-\delta} \left \|  ( \Theta_{0,in},U_{0,in}^{2,3})  \right \| _{H^s}+ \epsilon^2 \nu^{-1/2-\delta-1/12} +\epsilon^2 \nu^{-1/2-\delta}  | \ln{\nu}|^{1+a_0} \\
        \lesssim & \nu^{-\delta} \epsilon.
	\end{align*}
    The proof of this proposition is complete.
	\end{proof}

	\subsection{Energy estimates on $(U^2_0,U^3_0,\Theta_0)$ system}
	In this subsection, we improve \eqref{bs-u20}. By incompressible condition, The standard energy estimate gives that
	\begin{align*}
		&\|\wtb M(U^2_0,U^3_0,\Theta_0)(T)\|^2_{H^{s+1}}+\nu \|\nabla\wtb M(U^2_0,U^3_0,\Theta_0)\|^2_{L^2H^{s+1}}\\
        &+\left\|\sqrt{-\frac{\partial_tM_3}{M_3}}\wtb M(U^2_0,U^3_0,\Theta_0)\right\|^2_{L^2H^{s+1}}\\
		\lesssim&\|\wtb M(U^{2,3}_{0,in},\Theta_{0,in})\|^2_{H^{s+1}}+\left|\sum_{i=2,3}\int_{0}^{T}\langle\wtb M(U\cdot\nabla_LU^i)_0,\wtb MU^i_0\rangle_{H^{s+1}}dt\right|\\
		&+\left|\int_{0}^{T}\langle\wtb M(U\cdot\nabla_L\Theta)_0,\wtb M\Theta_0\rangle_{H^{s+1}}dt\right|.
	\end{align*}
	For the interaction of zero mode, with the help of \eqref{est-2l}, product estimate, and bootstrap assumptions, one obtain that
	\begin{align*}
		&\int_{0}^{T}\langle\wtb M(U_0\cdot\nabla\Theta_0),\wtb M\Theta_0\rangle_{H^{s+1}}dt\\
		\lesssim&\| \nabla \wtb (U^2_0,U^3_0,\Theta_0)\|_{L^2L^{\infty}}\|\nabla\wtb (U^2_0,U^3_0,\Theta_0)\|_{L^2H^{s+1}}\|\wtb(U^2_0,U^3_0,\Theta_0)\|_{L^{\infty}H^{s+1}}\\
		\lesssim&\nu^{-7/12 -\delta}\epsilon^3.
	\end{align*}
	Turning to the interaction of nonzero modes, we decompose it into
	\begin{align*}
		&\int_{0}^{T}\langle\wtb M(U_{\neq}\cdot\nabla_L\Theta_{\neq}),\wtb M\Theta_0\rangle_{H^{s+1}}dt\\
		=&\int_{0}^{T}\langle\wtb M(U_{\neq}\cdot\nabla_L\Theta_{\neq}),\wtb M\Theta_0\rangle_{H^{s}}dt+\sum_{j=1,3}\int_{0}^{T}\langle\nabla \wtb M(U^j_{\neq}\partial_j\Theta_{\neq}),\wtb\nabla M\Theta_0\rangle_{\dot{H^{s}}}dt\\
		&+\int_{0}^{T}\langle\wtb \nabla M(U^2_{\neq}\partial_Y^L\Theta_{\neq}),\wtb\nabla M\Theta_0\rangle_{\dot{H^{s}}}dt\\
		\triangleq &T^1_1+T^1_2+T^1_3.
	\end{align*}
	Using the bootstrap assumption and integration by parts, we have
	\begin{align*}
		T^1_1+T^1_2\lesssim&\|\wtb U_{\neq}\|_{L^2H^s}\|\wtb \nabla_L \Theta_{\neq}\|_{L^2H^s}\|\wtb \Theta_0\|_{L^{\infty}H^s}\\
		&+\|\wtb U^{1,3}_{\neq}\|_{L^2H^s}\| \wtb q^{1/2}\Theta_{\neq}\|_{L^{\infty}H^s}\|\nabla\wtb\Theta_0\|_{L^2H^{s+1}}\\
		\lesssim&\nu^{-\frac{1}{6}}\epsilon\nu^{-\frac{1}{2}}\epsilon^2+\nu^{-\frac{1}{6}}\epsilon^2\nu^{-\frac{1}{2}}\epsilon\lesssim\nu^{-\frac{2}{3}}\epsilon^3.
	\end{align*}
	Noting the fact that
	\begin{align}\label{ineq-m31}
			\left\langle t-\frac{\eta}{k}\right\rangle^{-1}\lesssim|\ln(\nu)|^{1+a_0}\left\langle t-\frac{\eta}{k}\right\rangle^{-1}|\ln(1+\langle t-\frac{\eta}{k}\rangle)|^{-1-a_0}+\nu, 
	\end{align}
	and	
	\begin{align}\label{ineq-m32}
		U^2_{\neq}(t,k,\eta,l)\lesssim  \langle t\rangle ^{-1}	\mathcal{F}(p^{\frac{1}{2}}\langle\nabla\rangle U^2_{\neq})(t,k,\eta,l),
	\end{align}
	then it follows from lemma \ref{est-fun} and \eqref{prop-m3} that
	\begin{align*}
		T^1_3\lesssim&\|(pq)^{\frac{1}{2}}U^2_{\neq}\|_{L^2H^s}\|(pq)^{\frac{1}{2}}\Theta_{\neq}\|_{L^2H^s}\|\wtb \Theta_0\|_{L^\infty H^{s+1}}\\
		&+\sum_{k\neq0}\int_{0}^{T}\int_{\mathbb{R}^4}\langle l\rangle|\langle t\rangle^{-1}\mathcal{F}(p^{\frac{1}{2}}\langle\nabla\rangle U^2_{\neq})(t,-k,\eta-\eta_1,l-l_1)|\\
		&\quad\quad\times|(\eta_1-kt)\mathcal{F}(\langle \nabla\rangle^sp^{\frac{1}{2}}\Theta_{\neq})(t,k,\eta_1,l_1)||\mathcal{F}(\wtb\langle \nabla\rangle^{s+1}\Theta_0)(t,0,\eta,l)|d\eta d\eta' dldl'dt\\
		&+|\ln(\nu)|^{1+a_0}\sum_{k\neq0}\int_{0}^{T}\int_{\mathbb{R}^4}\langle l\rangle\left|\sqrt{-\frac{\partial_tM_3}{M_3}}\mathcal{F}(p^{\frac{1}{2}}\langle\nabla\rangle^s U^2_{\neq})(t,-k,\eta-\eta_1,l-l_1)\right|\\
		&\quad\quad\times  \left|\langle t\rangle|\eta_1,k|^{2+\frac{a_0(2+a_0)}{2}}\mathcal{F}(p^{1/2}\Theta)_{\neq}(t,k,\eta_1,l_1)\right| \left|\sqrt{-\frac{\partial_tM_3}{M_3}}\mathcal{F}(\wtb \langle \nabla\rangle^{s+1}\Theta_0)(t,0,\eta,l)\right|\d \eta \d\eta_1 \d l \d l_1 \d t\\
		&+\nu\|\wtb U^2_{\neq}\|_{L^2H^s}\|\nabla_LF_{\neq}\|_{L^2H^s}\|\wtb\Theta_0\|_{L^{\infty} H^{s+1}}\\
		\lesssim&\|(pq)^{\frac{1}{2}}U^2_{\neq}\|_{L^2H^s}\|(pq)^{\frac{1}{2}}\Theta_{\neq}\|_{L^2H^s}\|\wtb \Theta_0\|_{L^\infty H^{s+1}}+\|(pq)^{\frac{1}{2}}U^2_{\neq}\|_{L^2H^s}\|\nabla_Lm^{\frac{1}{2}}F_{\neq}\|_{L^2H^s}\|\wtb \Theta_0\|_{L^{\infty}H^{s+1}}\\
		&+\nu^{-\frac{2}{3}}|\ln(\nu)|^{1+a_0}\left\|\sqrt{-\frac{\partial_tM_3}{M_3}}m^{\frac{1}{2}}G_{\neq}\right\|_{L^2H^s}\|M m^{1/2} p^{1/2} \wtb \Theta_{\neq}\|_{L^{\infty}H^s}\left\|\sqrt{-\frac{\partial_tM_3}{M_3}}\wtb \Theta_0\right\|_{L^2H^{s+1}}\\
		&+\nu\|\wtb U^2_{\neq}\|_{L^2H^s}\|\nabla_LF_{\neq}\|_{L^2H^s}\|\wtb\Theta_0\|_{L^{\infty} H^{s+1}}\\
		\lesssim&\nu^{-\frac{1}{6}}\epsilon\nu^{-\frac{1}{2}}\epsilon^2+\nu^{-\frac{1}{6}}\epsilon\nu^{-\frac{1}{2}}\epsilon^2+\nu^{-\frac{2}{3}}|\ln(\nu)|^{1+a_0}\epsilon^3+\nu\epsilon\nu^{-\frac{5}{6}}\epsilon^2\lesssim\nu^{-\frac{2}{3}}|\ln(\nu)|^{1+a_0}\epsilon^3.
	\end{align*}
    This completes the improvement of \eqref{bs-u20}.
    
	\subsection{Energy estimates on $V^1_0$}
	\subsubsection{Energy estimates on $\wtb V^1_0$}
	In this subsection, we improve\eqref{bs-v1}. For $0\leq s_1\leq s$, the energy estimate reads
	\begin{align*}
		&\|\wtb \nabla^{s_1} M V^1_0(T)\|^2_{ L^2}+\nu\| \nabla^{s_1+1} \wtb M V^1_0\|^2_{L^2L^2}+\left\|\sqrt{-\frac{\partial_tM_3}{M_3}}\wtb \nabla^{s_1}M V^1_0\right\|^2_{L^2L^2}\\
		\lesssim&\|\wtb \nabla^{s_1} V^1_{0,in}\|^2_{ L^2 } +\mathcal{L} -\mathcal{NL}_0-\mathcal{NL}_{\neq},
	\end{align*}
    where we denote
    \begin{align*}
        &\mathcal{L}=\beta^{-1}\int_{0}^{T}\langle \wtb \nabla^{s_1}(\mu-\kappa)\Delta M\Theta_0, \wtb \nabla^{s_1} M V^1_0\rangle_{L^2} dt,\\
        &\mathcal{NL}_{0}=\int_{0}^{T}\langle \wtb \nabla^{s_1} M(U_{0}\cdot\nabla V^1_{0}), \wtb \nabla^{s_1} M V^1_0\rangle_{L^2} dt,\\
        &\mathcal{NL}_{\neq}=\int_{0}^{T}\langle \wtb \nabla^{s_1} M (U_{\neq}\cdot\nabla_L V^1_{\neq})_0, \wtb \nabla^{s_1} M V^1_0\rangle_{L^2} dt.
    \end{align*}
    For linear error, the bootstrap assumptions gives that
    \begin{align*}
        \mathcal{L}\lesssim \nu\|\nabla\wtb\Theta_0\|_{L^2H^s}\|\nabla\wtb V^1_0\|_{L^2H^s}\lesssim C^{-1}_0(10C_0\epsilon)^2.
    \end{align*}
	By interpolation, it suffices to treat this when $s_1=0$ and $s_1=s$, and the low order energy estimates can be derived by a same procedure. For $s_1=s$, we decompose $\mathcal{NL}_0$ into three terms
    \begin{align*}
		\mathcal{NL}_0=&\int_{0}^{T}\langle \wtb  M (U^2_0 \nabla^{s} \partial_Y  V^1_0), \wtb \nabla^{s} M V^1_0\rangle_{L^2} dt\\
		&+\sum_{s_2=1}^{s}\int_{0}^{T}\langle \wtb  M (\nabla^{s_1}U^2_0  \nabla^{s-s_2}\partial_YV^1_0 ), \wtb \nabla^{s} M V^1_0\rangle_{L^2} dt\\
        &+ \int_{0}^{T}\langle \wtb  M ( \nabla^{s}U^3_0  \partial_Z  V^1_0), \wtb \nabla^{s} M V^1_0\rangle_{L^2} \d t \\
        &+\int_{0}^{T}\langle \wtb  M ([\nabla^{s},\partial_Z V^1_0]U^3_0 ), \wtb \nabla^{s} M V^1_0\rangle_{L^2} \d t =:T^2_1+T^2_2+T^2_3+T^2_4.
	\end{align*}
	Using \eqref{est-2l} and bootstrap assumptions, we derive that
	\begin{align*}
		T^2_1\lesssim&\|\wtb U^2_0\|_{L^2L^\infty}\|\partial_Y \wtb V^1_0\|_{L^2H^s}\|\wtb V^1_0\|_{L^{\infty}H^s}\\
		\lesssim&\nu^{-7/12-\delta}\epsilon^3.
	\end{align*}
	We treat with $s_2=s$ for simplicity. Substituting equation of $\Theta_0$ into $T^2_2$, we deduce that
	\begin{align*}
		T^2_2=&\int_{0}^{T}\langle \wtb  M (\nabla^{s} (\partial_t \Theta_0 -\nu \Delta \Theta_0 + (U\cdot\nabla_L \Theta)_0) \partial_Y V^1_0 ), \wtb \nabla^{s} M V^1_0\rangle_{L^2} dt\\
		=&\langle \wtb  M (\nabla^{s} \Theta_0  \partial_Y V^1_0 ), \wtb \nabla^{s} M V^1_0\rangle_{L^2}\big|_{t=0}^{t=T}\\
		&-2\int_{0}^{T}\langle \wtb  \partial_tM (\nabla^{s}  \Theta_0 \partial_Y V^1_0 ), \wtb \nabla^{s} M V^1_0\rangle_{L^2} dt \\
		& -\int_{0}^{T}\langle \wtb  M (\nabla^{s}  \Theta_0 \partial_Y (\nu\Delta V^1_0-(U\cdot\nabla_L V^1)_0) ), \wtb \nabla^{s} M V^1_0\rangle_{L^2} dt\\
		&-\int_{0}^{T}\langle \wtb  M (\nabla^{s}  \Theta_0 \partial_Y  V^1_0 ), \wtb \nabla^{s} M (\nu\Delta V^1_0-(U\cdot\nabla_L V^1)_0) \rangle dt\\
		&+ \int_{0}^{T}\langle \wtb  M (\nabla^{s} ((U\cdot\nabla_L \Theta)_0 - \nu \Delta \Theta_0 ) \partial_Y V^1_0 ), \wtb \nabla^{s} M V^1_0\rangle_{L^2} dt=\sum_{j=1}^{5}T^2_{2j}.
	\end{align*}
	The bootstrap assumption implies that
	\begin{align*}
		T^2_{21}+T^2_{22}\lesssim&\|\wtb \Theta_0\|_{L^{\infty}H^s}\|\wtb  V^1_0\|_{L^\infty H^s}^{2}\\
		&+	\|\wtb \Theta_0\|_{L^\infty H^s}\|\nabla \wtb V^1_0\|_{L^2 H^{s-1}}\left\|\sqrt{-\frac{\partial_tM_3}{M_3}}\wtb M V^1_0\right\|^2_{L^2H^{s}}\\
		\lesssim &\epsilon^3+\epsilon\nu^{-1/2}\epsilon^2.
	\end{align*}
	Turning to $T^2_{24}$, integration by parts gives that
	\begin{align*}
		T^2_{24}\lesssim& \nu \|\nabla \wtb \Theta_0\|_{L^2H^s}\|\wtb V^1_0\|_{L^\infty H^s}\|\nabla \wtb V^1_0\|_{L^2H^s}\\
		&+\|\wtb (U^{2}_0, U_0^3 ,\Theta_0)\|^2_{L^\infty H^s}\|\nabla \wtb V^1_0\|^2_{L^2H^s}\\
		&+\|\wtb \Theta_0\|_{L^\infty H^s}\|\wtb V^1_0\|_{L^\infty H^s}\|\wtb U_{\neq}\|_{L^2H^s}\|\nabla_L\wtb(U^1_{\neq},\Theta_{\neq})\|_{L^2H^s}\\
        \lesssim& \epsilon^3 +\epsilon^2\nu^{-1}\epsilon^2+\epsilon^2\nu^{-1/6}\epsilon\nu^{-1/2}\epsilon |\ln{\nu}|^{2+2a_0}\lesssim \nu^{-2/3} |\ln{\nu}|^{2+2a_0} \epsilon^3.
	\end{align*}
	The remaining terms $T^2_{23}$ and $T^2_{25}$ can be treated similarly. For $T^2_3$, by integration by parts and \eqref{est-3}, we obtain
	\begin{align*}
		T^2_3\lesssim\|\wtb U^3_0\|_{L^2W^{s-1,\infty}}\|\wtb V^1_0\|_{L^\infty H^s}\|\nabla \wtb V^1_0\|_{L^2H^s}\lesssim \nu^{-\delta}\epsilon^2\nu^{-1/2}\epsilon.
	\end{align*}
    Commutator estimate gives
    \begin{align*}
        T^2_4 \lesssim \nm{\wtb U^3_0}{{L^2W^{s-1,\infty}}}  \nm{\wtb \partial_Z V^1_0}{L^2 H^{s}} \nm{\wtb V^1_0}{L^{\infty} H^s} \lesssim \nu^{-\delta-1/2}\epsilon^3.
    \end{align*}
    \color{black}
	Since the treatment of $\mathcal{NL}_{\neq}$ is similar to that of  $T^2_{24}$, we omit it and finish the improvement of \eqref{bs-v1} .
    For $s_1=0$, using \eqref{est-2l} and bootstrap assumptions, we derive that
	Using \eqref{est-2l} and bootstrap assumptions, we derive that
	\begin{align*}
		\mathcal{NL}_0=&\int_{0}^{T}\langle \wtb  M (U_0 \cdot \nabla  V^1_0), \wtb  M V^1_0\rangle_{L^2} dt \\ \lesssim&\|\wtb (U^2_0, U^3_0)\|_{L^2L^\infty}\| \nabla \wtb V^1_0\|_{L^2H^s}\|\wtb V^1_0\|_{L^{\infty}H^s}\\
		\lesssim&\nu^{-\frac{1}{12} -\delta}\epsilon\nu^{-\frac{1}{2}}\epsilon^2=\nu^{-7/12 -\delta}\epsilon^3.
	\end{align*}
	Since the treatment of $\mathcal{NL}_{\neq}$ is similar to that of  $T^2_{24}$, we omit it and finish the improvement of \eqref{bs-v1} .
    \color{black}

    \subsubsection{ Energy estimate for $\nabla V_0^1$ }   
    In this subsection, we improve \eqref{bs-v1h}. For $0\leq s_1\leq s$, the energy estimate reads
    \begin{align*}
    	&\| \nabla^{s_1+1} M V^1_0(T)\|^2_{ L^2}+ \nu\| \nabla^{s_1+2}  M V^1_0\|^2_{L^2L^2}+\left\|\sqrt{-\frac{\partial_tM_3}{M_3}}\nabla^{s_1+1} M V^1_0\right\|^2_{L^2L^2}\\
    	\lesssim&\| \nabla^{s_1+1} V^1_{0,in}\|^2_{ L^2 } +\mathcal{L}^h -\mathcal{NL}^h_0-\mathcal{NL}^h_{\neq}, 
    \end{align*}
    where we denote
    \begin{align*}
    &\mathcal{L}=\beta^{-1}\int_{0}^{T}\langle \nabla^{s_1+1}(\mu-\kappa)\Delta M\Theta_0, \nabla^{s_1+1} M V^1_0\rangle_{L^2} dt\\
        &\mathcal{NL}^{h}_0=\int_{0}^{T}\langle  \nabla^{s_1+1} M(U_{0}\cdot\nabla V^1_{0}), \nabla^{s_1+1} M V^1_0\rangle_{L^2} dt,\\
        &\mathcal{NL}^{h}_{\neq}=\int_{0}^{T}\langle \nabla^{s_1+1} M (U_{\neq}\cdot\nabla_L V^1_{\neq})_0,  \nabla^{s_1+1} M V^1_0\rangle_{L^2} dt.
    \end{align*}
    Without loss of generality, we only estimate it when $s_1=s$. 
    For linear error, integration by parts implies that 
    \begin{align*}
        \mathcal{L}\lesssim \nu\|\nabla\Theta_0\|_{L^2H^{s+1}}\|\nabla^2 V^1_0\|_{L^2H^s}\lesssim C^{-1}_0(10C_0\nu^{-1/6}\epsilon)^2.
    \end{align*}
    For $\mathcal{NL}^h_0$, commutator and Strichartz estimates yield that
    \begin{align*}
        \mathcal{NL}^{h}_0 \lesssim & \int_0^T \left| \left \langle M \left[\nabla^{s+1} , \left( U_0 \cdot  \nabla \right) \right]  V_0^1 , M \nabla ^{s+1} V_0^1 \right \rangle _{L^2} \right| \d t \\
        &+\int_0^T\left|\langle M (U_0\cdot\nabla^{s+2}V^1_0),M\nabla^{s+1}V^1_0\rangle_{L^2}\right|dt\\
        \lesssim & \left \| \nabla U_0^{2,3} \right \|_{L^{\infty} H^{s}} \left\|  \nabla V_0^1 \right\|_{L^2 H^s} \left\|  \nabla^{s+1} V_0^1 \right\|_{L^2 L^2}+\|U^{2,3}_0\|_{L^2L^\infty}\|\nabla^2V^1_0\|_{L^2H^s}\|\nabla V^1_0\|_{L^\infty H^s} \\
        \lesssim & \epsilon^3 \nu^{-1/2-1/2} +\nu^{-1/12-\delta}\epsilon\nu^{-1/2}(\nu^{-1/6}\epsilon)^2\\
        =&\epsilon \nu^{-2/3} (\epsilon \nu^{-1/6})^2,
    \end{align*}
    where we employ $\nu^{1/2} \nm{\wtb \nabla V_0^1 }{\dot{H^s}} \leqslant \epsilon $ as in the $\dot{H}^s$ estimate of $\wtb V_0^1$.
    \color{black}    
   	Using lemma \ref{est-fun}, we conclude that 
   	\begin{align*}
   		\mathcal{NL}^h_{\neq}=&\int_{0}^{T}\langle |\nabla|^{s_1+1} M (U_{\neq}\cdot\nabla_L V^1_{\neq})_0,  |\nabla|^{s_1+1} M V^1_0\rangle_{L^2} dt\\
   		\lesssim&\|p^{\frac{1}{2}}\partial_X(U^1_{\neq},\Theta_{\neq})\|_{L^2H^s} \|\partial_X(U^1_{\neq},\Theta_{\neq})\|_{L^2H^s}\|\nabla V^1_0\|_{L^\infty H^s}\\
   		&+(\|p^{\frac{1}{2}}U^2_{\neq}\|_{L^2H^s} \|\partial_Y^LU^1_{\neq}\|_{L^2H^s}+\|U^2_{\neq}\|_{L^2H^s}\|\partial_Y^L p^{\frac{1}{2}}U^1_{\neq}\|_{L^2H^s})\|\nabla  V^1_0\|_{L^\infty H^s}\\
   		&+(\|p^{\frac{1}{2}}U^3_{\neq}\|_{L^2H^s} \|\partial_ZU^1_{\neq}\|_{L^2H^s}+\|U^3_{\neq}\|_{L^2H^s}\|\partial_Z p^{\frac{1}{2}}U^1_{\neq}\|_{L^2H^s})\|\nabla  V^1_0\|_{L^\infty H^s}\\
   		\lesssim&\nu^{-1/2} |\ln{\nu}|^{2+2a_0} \epsilon(\nu^{-1/6}\epsilon)^2+(\nu^{-1/6}\epsilon\nu^{-1/2}\epsilon+\epsilon\nu^{-5/6}\epsilon)|\ln{\nu}|^{1+a_0} \nu^{-1/6}\epsilon\\
        &+  (\nu^{-1/2}\epsilon\nu^{-1/6}\epsilon+\nu^{-1/6}\epsilon\nu^{-1/2}\epsilon ) |\ln{\nu}|^{2+2a_0}  \nu^{-1/6}\epsilon\\
   		\lesssim&\nu^{-2/3} \epsilon|\ln{\nu}|^{1+a_0} (\nu^{-1/6}\epsilon)^2,
   	\end{align*}
   	and finish the improvement of \eqref{bs-v1h}.
    
	\section{Energy estimates on nonzero modes}
	\subsection{Energy estimates on $(G_{\neq},F_{\neq})$ system}
	In this subsection, we improve \eqref{bs-gf}. The energy estimate of $(G_{\neq},F_{\neq})$ system reads
	\begin{align*}
		&\|m^{\frac{1}{2}}M(G_{\neq},F_{\neq})(T)\|^2_{H^s}+ \nu \|m^{\frac{1}{2}}M\nabla_L(G_{\neq},F_{\neq})\|^2_{L^2H^s}+\sum_{i=1,2,3}\left \|\sqrt{-\frac{\partial_t M_i}{M_i}}m^{\frac{1}{2}}M(G_{\neq},F_{\neq})\right\|^2_{L^2H^s} \\
		\lesssim& \|(G_{\neq,in},F_{\neq,in})\|^2_{H^s}- \sum_{i=1}^{3}T^{3}_i,
	\end{align*}
	where nonlinear terms are given by
	\begin{align*}
		T^3_1=&\int_{0}^{T}\langle m^{\frac{1}{2}}Mp(U\cdot\nabla_L U^2)_{\neq},m^{\frac{1}{2}}MG\rangle_{H^s} dt,\\
		T^3_2=&\int_{0}^{T}\langle m^{\frac{1}{2}}M\partial_Y^L(\partial^L_i U^j \partial^L_j U^i)_{\neq},m^{\frac{1}{2}}MG\rangle_{H^s} dt,\\
		T^3_3=&\int_{0}^{T}\langle m^{\frac{1}{2}}M(pq)^{\frac{1}{2}}(U\cdot\nabla_L \Theta)_{\neq},m^{\frac{1}{2}}MF\rangle_{H^s} dt.
	\end{align*}
	We introduce the notation, for $j\in\{1,2,3\}$ and $s_3,s_4\in\{0,\neq \}$,
	\begin{align*}
		T^3_1(j,s_3,s_4):=&\int_{0}^{T}\langle m^{\frac{1}{2}}Mp(U^j_{s_3}\partial_j^L U^2_{s_4}),m^{\frac{1}{2}}MG\rangle_{H^s} dt.
	\end{align*}
	It follows from integration by parts and lemma \ref{est-fun} that
	\begin{align*}
		T^3_1(1,0,\neq)\lesssim&\|\nabla U^1_0\|_{L^\infty H^s}\|M\partial_XU^2_{\neq}\|_{L^2H^s}\|\nabla_L m^{\frac{1}{2}}MG_{\neq}\|_{L^2H^s}\\
		&+\|U^1_0\|_{L^\infty H^s}\|M\partial_X\nabla_LU^2_{\neq}\|_{L^2H^s}\|\nabla_L m^{\frac{1}{2}}MG_{\neq}\|_{L^2H^s}\\
		\lesssim&\nu^{-1/6}\epsilon^2\nu^{-1/2}\epsilon+\epsilon\nu^{-1/6}\epsilon\nu^{-1/2}\epsilon=\nu^{-2/3}\epsilon^3.
	\end{align*}
    For $T^{3}_1(1,\neq,\neq)$, using \eqref{ineq-m31}, \eqref{ineq-m32}, and \eqref{prop-m3}, one obtain that
	\begin{align*}
		&T^3_1(1,\neq,\neq)\\\lesssim&\|U^1_{\neq}\|_{L^2H^s}\|\partial_XMG_{\neq}\|_{L^2H^s}\|m^{\frac{1}{2}}MG_{\neq}\|_{L^\infty H^{s}}\\
		&+\|p^{\frac{1}{2}}U^1_{\neq}\|_{L^2H^s}\|\partial_Xp^{\frac{1}{2}}MU^2_{\neq}\|_{L^2H^s}\|m^{\frac{1}{2}}MG_{\neq}\|_{L^\infty H^{s}}\\
		&+\sum_{k,k_1\neq0}\int_{0}^{T}\int_{\mathbb{R}^4}|\mathcal{F}(p\langle\nabla\rangle^s U^1_{\neq})(t,k-k_1,\eta-\eta_1,l-l_1)|\\
		&\quad\quad\times|\langle t\rangle^{-1}k_1\mathcal{F}(\langle \nabla\rangle p^{\frac{1}{2}}U^2_{\neq})(t,k_1,\eta_1,l_1)||\mathcal{F}(m^{\frac{1}{2}}M\langle \nabla\rangle^{s}G_{\neq})(t,k,\eta,l)|\d\eta \d\eta_1 \d l \d l_1 \d t\\
		&+|\ln(\nu)|^{1+a_0}\sum_{k,k_1\neq0}\int_{0}^{T}\int_{\mathbb{R}^4} \langle t\rangle \left|\mathcal{F}(\langle\nabla\rangle^{2+\frac{a_0(2+a_0)}{2}} p^{\frac12} U^1_{\neq})(t,k-k_1,\eta-\eta_1,l-l_1)\right|\\
		&\quad\quad\times\left|\sqrt{-\frac{\partial_tM_3}{M_3}}\mathcal{F}(\langle \nabla \rangle^sp^{\frac{1}{2}}MU^2_{\neq})(t,k_1,\eta_1,l_1)\right|\\
		&\quad\quad\times\left|\sqrt{-\frac{\partial_tM_3}{M_3}}\mathcal{F}(m^{\frac{1}{2}}M \langle \nabla\rangle^{s}G_{\neq})(t,k,\eta,l)\right| \d\eta \d\eta_1 \d l \d l_1 \d t\\
		&+\nu\|\wtb U^2_{\neq}\|_{L^2H^s}\|\nabla_LF_{\neq}\|_{L^2H^s}\|\wtb\Theta_0\|_{L^{\infty} H^{s+1}}\\
		\lesssim&
		\|U^1_{\neq}\|_{L^2H^s}\|\nabla_L m^{\frac{1}{2}}MG_{\neq}\|_{L^2H^s}\|m^{\frac{1}{2}}MG_{\neq}\|_{L^\infty H^{s}}+\|p^{\frac{1}{2}}U^1_{\neq}\|_{L^2H^s}\|m^{\frac{1}{2}}MG_{\neq}\|_{L^2H^s}\|m^{\frac{1}{2}}MG_{\neq}\|_{L^\infty H^{s}}\\
		&+\|m^{\frac{1}{2}}pU^1_{\neq}\|_{L^2H^s}\|m^{\frac{1}{2}}MG_{\neq}\|_{L^2H^s}\|m^{\frac{1}{2}}MG_{\neq}\|_{L^\infty H^{s}}\\
        &+\nu^{-\frac{2}{3}}|\ln(\nu)|^{1+a_0}\|M m^{1/2}p^{1/2}U^1_{\neq}\|_{L^\infty H^s}\left\|\sqrt{-\frac{\partial_tM_3}{M_3}}m^{\frac{1}{2}}MG_{\neq}\right\|^2_{L^2H^s}\\
		&+\nu\|p U^1_{\neq}\|_{L^2H^s}\|M\partial_XU^2_{\neq}\|_{L^2H^s}\|m^{\frac{1}{2}}MG_{\neq}\|_{L^{\infty} H^{s}}\\
		\lesssim&|\ln(\nu)|^{1+a_0}\nu^{-\frac{1}{6}}\epsilon\nu^{-\frac{1}{2}}\epsilon^2+\nu^{-1/2}|\ln(\nu)|^{1+a_0}\epsilon\nu^{-1/6}\epsilon+\nu^{-\frac{1}{2}}|\ln(\nu)|^{1+a_0}\epsilon\nu^{-\frac{1}{6}}\epsilon^2\\
        &+\nu^{-\frac{2}{3}}|\ln(\nu)|^{2+2a_0}\epsilon^3+\nu\nu^{-5/6}|\ln(\nu)|^{1+a_0}\epsilon^3\\
		\lesssim&\nu^{-\frac{2}{3}}|\ln(\nu)|^{2+2a_0}\epsilon^3.
	\end{align*}
	The incompressible condition and integration by parts imply that
	\begin{equation}\label{ineq-t41}
	    \begin{aligned}
		&T^3_1(2,0,\neq)+T^3_1(2,\neq,0)\\
		\lesssim&\|\wtb (U^2_0,U^3_0)\|_{L^{\infty} H^{s+1}}\|\partial_Y^LMU^2_{\neq}\|_{L^2H^s}\|m^{\frac{1}{2}}MG_{\neq}\|_{L^2H^s}+\|\nabla\langle \nabla\rangle U^2_{0}\|_{L^\infty H^s}\|m^{\frac{1}{2}}MG_{\neq}\|^2_{L^2H^s}\\
		&+\|\langle\nabla\rangle U^2_{0}\|_{L^\infty H^s}\|m^{\frac{1}{2}}\nabla_LMG_{\neq}\|_{L^2 H^s}\|m^{\frac{1}{2}}MG_{\neq}\|_{L^2H^s}\\
		&+\|m^{\frac{1}{2}}Mp^{\frac{1}{2}}U^2_{\neq}\|_{L^2H^s}\|\partial_ZU^3_0\|_{L^\infty H^{s+1}}\|\nabla_Lm^{\frac{1}{2}}MG_{\neq}\|_{L^2H^s}\\
		&+\|MU^2_{\neq}\|_{L^2H^s}\|\partial_ZU^3_0\|_{L^\infty H^{s+1}}\|\nabla_Lm^{\frac{1}{2}}MG_{\neq}\|_{L^2H^s}\\
		\lesssim&2\epsilon(\nu^{-1/6}\epsilon)^2+\epsilon\nu^{-1/2}\epsilon\nu^{-1/6}\epsilon+2\epsilon^2\nu^{-1/2}\epsilon\lesssim\nu^{-2/3}\epsilon^3.
	\end{aligned}
	\end{equation}
	The estimates of $T^3_1(2,\neq,\neq)$ is similar to that of $T^1_3$. We omit the details and conclude that
	\begin{align*}
		&T^3_1(2,\neq,\neq)\\
        \lesssim&(\|p U^2_{\neq}\|_{L^2H^s}\|M\partial_Y^LU^2_{\neq}\|_{L^2H^s}+\|M p^{\frac12} U^2_{\neq}\|_{L^2H^s}\|p^{\frac12} \partial_Y^L U^2_{\neq}\|_{L^2H^s})\|m^{\frac{1}{2}}MG_{\neq}\|_{L^\infty H^s}\\
		&+\|p^{\frac{1}{2}}U^2_{\neq}\|_{L^2H^{5/2+}}\|\nabla_Lm^{\frac{1}{2}}MG_{\neq}\|_{L^2H^s}\|m^{\frac{1}{2}}M G_{\neq}\|_{L^{\infty}H^{s}}\\
		&+|\ln(\nu)|^{1+a_0}\left\|\sqrt{-\frac{\partial_tM_3}{M_3}}m^{\frac{1}{2}}MG_{\neq}\right\|_{L^2H^s}\nu^{-\frac{2}{3}}\|m^{\frac12}M p^{\frac12} \partial_Y^L U^2_{\neq}\|_{L^{\infty}H^{7/2+}}\left\|\sqrt{-\frac{\partial_tM_3}{M_3}}m^{\frac{1}{2}}M G_{\neq}\right\|_{L^2H^{s}}\\
		&+\nu\| U^2_{\neq}\|_{L^2H^s}\|\nabla_LG_{\neq}\|_{L^2H^s}\|m^{\frac{1}{2}}M G_{\neq}\|_{L^{\infty} H^{s}}\\
		\lesssim&(\nu^{-1/2}\epsilon\nu^{-1/6}\epsilon+\nu^{-1/6}\epsilon\nu^{-1/2}\epsilon)\epsilon+\nu^{-1/6}\epsilon\nu^{-1/2}\epsilon^2+|\ln(\nu)|^{1+a_0}\epsilon\nu^{-2/3}\epsilon^2+\nu \epsilon\nu^{-5/6}\epsilon^2\\
		\lesssim&\nu^{-2/3}|\ln(\nu)|^{1+a_0}\epsilon^3.
	\end{align*}
	For $T^3_1(3,\cdot,\cdot)$, arguing as in the estimates of $T^3_1(1,\cdot,\cdot)$, we deduce that
	\begin{align*}
		&T^3_1(3,0,\neq)+T^3_1(3,\neq,0)\\
		\lesssim&(\|U^3_0\|_{L^\infty H^{s+1}}\|\partial_ZMU^2_{\neq}\|_{L^2H^s}+\|U^3_0\|_{L^\infty H^{s+1}}\|\partial_Zp^{\frac{1}{2}}m^{\frac{1}{2}}MU^2_{\neq}\|_{L^2H^s})\|\nabla_Lm^{\frac{1}{2}}MG_{\neq}\|_{L^2H^s}\\
		&+(\|m^{\frac{1}{2}}Mp^{\frac{1}{2}}U^3_{\neq}\|_{L^2H^s}\|\partial_ZU^2_0\|_{L^\infty H^{s+1}}+\|MU^3_{\neq}\|_{L^2H^s}\|\partial_ZU^2_0\|_{L^\infty H^{s+1}})\|\nabla_L m^{\frac{1}{2}}MG_{\neq}\|_{L^2H^s}\\
		\lesssim&(\epsilon^2+\epsilon\nu^{-1/6}\epsilon)\nu^{-1/2}\epsilon+2\nu^{-1/6}|\ln(\nu)|^{1+a_0}\epsilon^2\nu^{-1/2}\epsilon\lesssim\nu^{-2/3}|\ln(\nu)|^{1+a_0}\epsilon^3,
	\end{align*}
	and
	\begin{align*}
		T^3_1(3,\neq,\neq)\lesssim&\|U^3_{\neq}\|_{L^2H^s}\|\partial_ZMG_{\neq}\|_{L^2H^s}\|m^{\frac{1}{2}}MG_{\neq}\|_{L^\infty H^{s}}\\
		&+\|p^{\frac{1}{2}}U^3_{\neq}\|_{L^2H^s}\|\partial_Zp^{\frac{1}{2}}MU^2_{\neq}\|_{L^2H^s}\|m^{\frac{1}{2}}MG_{\neq}\|_{L^\infty H^{s}}\\
		&+\|m^{\frac{1}{2}}pU^3_{\neq}\|_{L^2H^s}\|m^{\frac{1}{2}}MG^2_{\neq}\|_{L^2H^3}\|m^{\frac{1}{2}}MG_{\neq}\|_{L^\infty H^{s}}\\
		&+\nu^{-\frac{2}{3}}|\ln(\nu)|^{1+a_0} \|Mm^{\frac12} p^{\frac12} U^3_{\neq}\|_{L^\infty H^{7/2+}}\left\|\sqrt{-\frac{\partial_tM_3}{M_3}}m^{\frac{1}{2}}MG_{\neq}\right\|^2_{L^2H^s}\\
		&+\nu\|p U^3_{\neq}\|_{L^2H^s}\|M\partial_ZU^2_{\neq}\|_{L^2H^s}\|m^{\frac{1}{2}}MG_{\neq}\|_{L^{\infty} H^{s}}\\
		\lesssim&\nu^{-\frac{1}{6}}|\ln(\nu)|^{1+a_0}\epsilon\nu^{-\frac{1}{2}}\epsilon^2+\nu^{-1/2}|\ln(\nu)|^{1+a_0}\epsilon\nu^{-1/6}\epsilon+\nu^{-\frac{1}{2}}|\ln(\nu)|^{1+a_0}\epsilon\nu^{-\frac{1}{6}}\epsilon^2\\
        &+\nu^{-\frac{2}{3}}|\ln(\nu)|^{2+2a_0}\epsilon^3+\nu\nu^{-5/6}|\ln(\nu)|^{1+a_0}\epsilon^3\\
		\lesssim&\nu^{-\frac{2}{3}}|\ln(\nu)|^{2+2a_0}\epsilon^3.
	\end{align*}
	
	Turning to the nonlinear pressure term $T^4_2$, we introduce the notation for $(i,j)\in\{1,2,3\}$ and $s_3,s_4\in\{0,\neq\}$,
	\begin{equation*}
		T^3_2(i,j,s_3,s_4)=\int_{0}^{T}\langle m^{\frac{1}{2}}M\partial_Y^L(\partial^L_i U^j_{\neq} \partial^L_j U^i_{\neq}),m^{\frac{1}{2}}MG\rangle_{H^s} dt,\\
	\end{equation*}
	For $i=j\in\{1,3\}$, it follows from lemma \eqref{est-fun} that
	\begin{align*}
		T^3_2(i,j)\lesssim&\|\partial_XU^1_{\neq}\|_{L^2H^s}\|\partial_XMU^1_{\neq}\|_{L^\infty H^s}\|\nabla_Lm^{\frac{1}{2}}MG_{\neq}\|_{L^2H^s}\\
		&+\|\partial_ZU^1\|_{L^\infty H^s}\|\partial_XMU^3_{\neq}\|_{L^2H^s}\|\nabla_Lm^{\frac{1}{2}}MG_{\neq}\|_{L^2H^s}\\
		&+\|\partial_ZU^3\|_{L^\infty H^s}\|\partial_ZMU^3_{\neq}\|_{L^2H^s}\|\nabla_Lm^{\frac{1}{2}}MG_{\neq}\|_{L^2H^s}\\
		\lesssim&3|\ln(\nu)|^{1+a_0}\epsilon\nu^{-1/6}|\ln(\nu)|^{1+a_0}\epsilon\nu^{-1/2}\epsilon\lesssim\nu^{-2/3}|\ln(\nu)|^{2+2a_0}\epsilon^3.
	\end{align*}
	The estimates of $i\in\{1,2,3\},j=2$ are obtained in $T^3_1$. The treatment of $T^3_3(\cdot,\neq,\neq)$ is exact the same as that of $T^1$. The bootstrap assumption and \eqref{prom3} give that
	\begin{align*}
		&T^3_3(\cdot,0,\neq)+T^3_3(\cdot,\neq,0)\\
		\lesssim&\|\wtb U^1_0\|_{L^\infty H^{s}}\|\partial_Xq^{\frac{1}{2}}M\Theta_{\neq}\|_{L^2H^s}\|\nabla_Lm^{\frac{1}{2}}MF_{\neq}\|_{L^2H^s}\\
		&+(\|\wtb U^2_0\|_{L^\infty H^{s+1}}\| m^{\frac{1}{2}}MF_{\neq}\|_{L^2H^s}+
		\|Mq^{\frac{1}{2}}U^2_{\neq}\|_{L^2H^s}\|\wtb \Theta_0\|_{L^\infty H^{s+1}})\|\nabla_Lm^{\frac{1}{2}}MF_{\neq}\|_{L^2H^s}\\
		&+ (\|\wtb U^3_0\|_{L^\infty H^{s}}\| qM\Theta_{\neq}\|_{L^2H^s}+\|Mq^{\frac{1}{2}}U^3_{\neq}\|_{L^2H^s}\|\wtb \Theta_0\|_{L^\infty H^{s+1}})\|\nabla_Lm^{\frac{1}{2}}MF_{\neq}\|_{L^2H^s}\\
		\lesssim&\epsilon\nu^{-1/6}\epsilon\nu^{-1/2}\epsilon+(\epsilon\nu^{-1/6}\epsilon+\epsilon^2)\nu^{-1/2}\epsilon+(\epsilon\nu^{-1/6}\epsilon+\nu^{-1/6}|\ln(\nu)|^{1+a_0}\epsilon^2)\nu^{-1/2}\epsilon\lesssim\nu^{-2/3}|\ln(\nu)|^{1+a_0}\epsilon^3.
	\end{align*}
	This complete the improvement of \eqref{bs-gf}.
	
  	\subsection{Energy estimates on $W_{\neq}$}
  	In this subsection, we improve \eqref{bs-w}. The energy estimate reads
  	
    \begin{align*}
		&\|m^{\frac{1}{2}}MW_{\neq}(T)\|^2_{H^s}+ \nu \|m^{\frac{1}{2}}M p^{1/2} W_{\neq}\|^2_{L^2H^s}+\sum_{i=1,2,3}\left \|\sqrt{-\frac{\partial_t M_i}{M_i}}m^{\frac{1}{2}}M W_{\neq}\right\|^2_{L^2H^s} \\
		\lesssim& \|W_{\neq,in}\|^2_{H^s}+LP- \sum_{i=1}^{3}T^5_i,
	\end{align*}
	where we denote
	\begin{align*}
    LP=&\int_{0}^{T}\left \langle m^{\frac{1}{2}}M\left(\frac{2\partial_{XZ}q^{\frac{1}{2}}}{p^{\frac{3}{2}}}G -\frac{\partial_{XZ}}{(pq)^{\frac{1}{2}}}G-\frac{(\partial_tp)\partial_{YZ}^L}{p^{\frac{3}{2}}q^{\frac{1}{2}}}G\right)_{\neq},m^{\frac{1}{2}}MW_{\neq}\right\rangle_{H^s}dt,\\
		T^5_1=&\int_{0}^{T}\langle m^{\frac{1}{2}}M(pq)^{1/2}(U\cdot\nabla_L U^3)_{\neq},m^{\frac{1}{2}}M W\rangle_{H^s} dt,\\
        T^5_2=&-\int_{0}^{T}\langle m^{\frac{1}{2}}M \frac{\partial_Y^L \partial_Z}{q^{1/2}} p^{1/2}(U\cdot\nabla_L U^2)_{\neq},m^{\frac{1}{2}}M W\rangle_{H^s} dt,\\
		T^5_3=&\int_{0}^{T}\langle m^{\frac{1}{2}}M\partial_Z (\frac{q}{p})^{1/2}(\partial^L_i U^j \partial^L_j U^i)_{\neq},m^{\frac{1}{2}}MW\rangle_{H^s} dt,\\
		T^5_4=&-\int_{0}^{T}\langle m^{\frac{1}{2}}M\partial_Y^L\frac{\partial_{YZ}^L}{(pq)^{\frac{1}{2}}}(\partial^L_i U^j \partial^L_j U^i)_{\neq},m^{\frac{1}{2}}MW\rangle_{H^s} dt.
	\end{align*}
    For the $LP$ term, using \eqref{ineq-m31} and \eqref{prop-m3}, we deduce that
  	\begin{align*}
  		LP\lesssim&|\ln(\nu)|^{1+a_0}
  		\left\|\sqrt{-\frac{\partial_tM_3}{M_3}}m^{\frac{1}{2}}MG_{\neq}\right\|_{L^2H^s}
  		 \left\|\sqrt{-\frac{\partial_tM_3}{M_3}}m^{\frac{1}{2}}MW_{\neq}\right\|_{L^2H^s}\\
         &+\nu\|m^{\frac{1}{2}}MG_{\neq}\|_{L^2H^s}\|m^{\frac{1}{2}}MW_{\neq}\|_{L^2H^s}
         \lesssim (C_0)^{-1}(C_0|\ln(\nu)|^{1+a_0}\epsilon)^2.
  	\end{align*}
	For nonlinear terms, we introduce the notation, for $j\in\{1,2,3\}$ and $s_3,s_4\in\{0,\neq \}$,
	\begin{align*}
		T^5_1(j,s_3,s_4):=&\int_{0}^{T}\langle m^{\frac{1}{2}}M(pq)^{1/2}(U^j_{s_3}\partial_j^L U^3_{s_4}),m^{\frac{1}{2}}M W\rangle_{H^s} dt, \\
        T^5_2(j,s_3,s_4):=&\int_{0}^{T}\langle m^{\frac{1}{2}}M \frac{\partial_Y^L \partial_Z}{q^{1/2}} p^{1/2} (U^j_{s_3}\partial_j^L U^2_{s_4}),m^{\frac{1}{2}}M W\rangle_{H^s} dt.
	\end{align*}
	It follows from integration by parts and lemma \ref{est-fun} that
	\begin{align*}
		T^5_1(1,0,\neq)\lesssim&\| \wtb U^1_0\|_{L^\infty H^s}\|M\partial_X U^3_{\neq}\|_{L^2H^s}\|\nabla_L m^{\frac{1}{2}}M W_{\neq}\|_{L^2H^s}\\
		&+\|U^1_0\|_{L^\infty H^s}\|M\partial_X \wtb U^3_{\neq}\|_{L^2H^s}\|\nabla_L m^{\frac{1}{2}}M W_{\neq}\|_{L^2H^s}\\
        \lesssim&\nu^{-1/6}\epsilon^2\nu^{-1/2}\epsilon+\epsilon\nu^{-1/6}\epsilon\nu^{-1/2}\epsilon=\nu^{-2/3}\epsilon^3.
	\end{align*}
	For $T^5_1(1,\neq,\neq)$, it is clear that
    \begin{align*}
		&T^5_1(1,\neq,\neq)\\
		\lesssim&
		\left( \|q^{1/2} U^1_{\neq}\|_{L^{\infty} H^s}\|\partial_X  U^3_{\neq}\|_{L^2H^s}
		+\|U^1_{\neq}\|_{L^{\infty} H^s}\|\partial_X q^{\frac{1}{2}} U^3_{\neq}\|_{L^2H^s} \right) \|m^{\frac{1}{2}}  p^{1/2} W_{\neq}\|_{L^2 H^{s}}\\
		\lesssim& \nu^{-1/6-1/2}|\ln(\nu)|^{3+3a_0}\epsilon^3= \nu^{-\frac{2}{3}}|\ln(\nu)|^{3+3a_0}\epsilon^3.
	\end{align*}
	The incompressible condition and integration by parts imply that
	\begin{equation}\label{ineq-t41}
	    \begin{aligned}
		~&T^5_1(2,0,\neq)+T^5_1(2,\neq,0)\\
		\lesssim&\|\wtb (U^2_0,U^3_0)\|_{L^{\infty} H^{s+1}}\|\partial_Y^LMU^3_{\neq}\|_{L^2H^s}\|m^{\frac{1}{2}}MW_{\neq}\|_{L^2H^s}\\
        ~&+\|\nabla\langle \nabla\rangle U^2_{0}\|_{L^\infty H^s}\|m^{\frac{1}{2}}M q^{1/2} \partial_Y^L U^3_{\neq}\|_{L^2H^s} \nm{m^{1/2}MW_{\neq}}{L^2 H^s} \\
        ~&+\|\wtb \langle \nabla\rangle U^2_{0}\|_{L^\infty H^s}\|m^{\frac{1}{2}}M p^{1/2} \partial_Y^L U^3_{\neq}\|_{L^2H^s} \nm{m^{1/2}MW_{\neq}}{L^2 H^s} \\
		~&+\|\langle\nabla\rangle U^2_{0}\|_{L^\infty H^s}\|m^{\frac{1}{2}} M \partial_Y^L p^{1/2} q^{1/2}   U^3_{\neq}\|_{L^2H^s}\|m^{\frac{1}{2}}M W_{\neq}\|_{L^2H^s}\\
		~&+\|Mq^{\frac{1}{2}}U^2_{\neq}\|_{L^2H^s}\|\partial_Y U^3_0\|_{L^\infty H^{s}}\|\nabla_Lm^{\frac{1}{2}}MW_{\neq}\|_{L^2H^s}\\
		~&+\|MU^2_{\neq}\|_{L^2H^s}\|\partial_Y \wtb U^3_0\|_{L^\infty H^{s}}\|\nabla_Lm^{\frac{1}{2}}MW_{\neq}\|_{L^2H^s}\\
		\lesssim& |\ln{\nu}|^{2+2a_0} ( \epsilon^3 \nu^{-1/2-1/6}+\epsilon(\nu^{-1/6}\epsilon)^2+\epsilon\nu^{-1/2}\epsilon\nu^{-1/6}\epsilon+ \epsilon^3 \nu^{-1/2-1/6} +2\epsilon^2\nu^{-1/2}\epsilon)\\
        \lesssim&\nu^{-2/3}\epsilon^3.
	\end{aligned}
	\end{equation}
	The estimates of $T^5_1(2,\neq,\neq)$ is similar to that of $T^1_3$. We omit the details and conclude that
    \begin{align*}
		~&T^5_1(2,\neq,\neq)\\
        \lesssim&(\| (pq)^{1/2} U^2_{\neq}\|_{L^2H^s}\|\partial_Y^L M U^3_{\neq}\|_{L^2H^s} \|m^{\frac{1}{2}}M W_{\neq}\|_{L^\infty H^s}+\| p^{1/2} U^2_{\neq}\|_{L^2H^s}\|  q^{1/2} \partial_Y^L MU^3_{\neq}\|_{L^2H^s}\|m^{\frac{1}{2}} M W_{\neq}\|_{L^\infty H^s} \\
		&+\|p^{\frac{1}{2}} (1+q^{1/2})U^2_{\neq}\|_{L^2H^{5/2+}}\|\nabla_Lm^{\frac{1}{2}}M W_{\neq}\|_{L^2H^s}\|m^{\frac{1}{2}}M W_{\neq}\|_{L^{\infty}H^{s}}\\
		&+|\ln(\nu)|^{1+a_0}\left\|\sqrt{-\frac{\partial_tM_3}{M_3}}m^{\frac{1}{2}}M G_{\neq}\right\|_{L^2H^s}\nu^{-\frac{2}{3}} \|m^{1/2}M W_{\neq}\|_{L^{\infty}H^{7/2+}}\left\|\sqrt{-\frac{\partial_tM_3}{M_3}}m^{\frac{1}{2}}M W_{\neq}\right\|_{L^2H^{s}}\\
		&+\nu\| (1+q^{1/2}) U^2_{\neq}\|_{L^2H^s}\|\nabla_L MW_{\neq}\|_{L^2H^s}\|m^{\frac{1}{2}}M W_{\neq}\|_{L^{\infty} H^{s}}\\
		\lesssim& |\ln{\nu}|^{2+2a_0}(2\nu^{-1/6}\epsilon\nu^{-1/2}\epsilon^2+\nu^{-1/6}\epsilon\nu^{-1/2}\epsilon^2+|\ln(\nu)|^{1+a_0}\epsilon\nu^{-2/3}\epsilon^2+\nu \epsilon\nu^{-5/6}\epsilon^2)\\
		\lesssim&\nu^{-2/3}|\ln(\nu)|^{3+3a_0}\epsilon^3.
	\end{align*}
	For $T^3_1(3,\cdot,\cdot)$, arguing as in the estimates of $T^3_1(1,\cdot,\cdot)$, we deduce that
	\begin{align*}
		&T^5_1(3,0,\neq)+T^5_1(3,\neq,0)\\
		\lesssim&(\|U^3_0\|_{L^\infty H^{s+1}}\|\partial_ZMU^3_{\neq}\|_{L^2H^s}+\|U^3_0\|_{L^\infty H^{s+1}}\|\partial_Z q^{\frac{1}{2}}m^{\frac{1}{2}}MU^3_{\neq}\|_{L^2H^s})\|\nabla_Lm^{\frac{1}{2}}MW_{\neq}\|_{L^2 H^s}\\
		&+(\|m^{\frac{1}{2}}Mq^{\frac{1}{2}}U^3_{\neq}\|_{L^2H^s}\|\partial_Z U^3_0\|_{L^\infty H^{s+1}}+\|MU^3_{\neq}\|_{L^2H^s}\|\partial_ZU^3_0\|_{L^\infty H^{s+1}})\|\nabla_L m^{\frac{1}{2}}MG_{\neq}\|_{L^2H^s}\\
		\lesssim&2\epsilon^3  \nu^{-1/6-1/2} |\ln{\nu}|^{2+2a_0} +2\nu^{-1/6}|\ln(\nu)|^{1+a_0}\epsilon^2\nu^{-1/2}\epsilon\lesssim\nu^{-2/3}|\ln(\nu)|^{2+2a_0}\epsilon^3,
	\end{align*}
	and
	\begin{align*}
		T^3_1(3,\neq,\neq)\lesssim & \left( \| q^{1/2} U^3_{\neq}\|_{L^{\infty}H^s}\|\partial_Z M  U^3_{\neq}\|_{L^{\infty}H^s} +\|  U^3_{\neq}\|_{L^2H^s}\|q^{1/2}\partial_Z M  U^3_{\neq}\|_{L^2H^s} \right) \|m^{\frac{1}{2}} p^{1/2} M W_{\neq}\|_{L^2 H^{s}}\\
        \lesssim & |\ln{\nu}|^{3+3a_0} \nu^{-1/6-1/2} \epsilon^3.
	\end{align*}
    The estimates of $T^5_2, T^5_3, T^5_4$ are similar to $T^3_1, T^3_2, T^3_2$ respectively, and we omit them here. This finish the improvement of \eqref{bs-w}.
    \section{Appendix}
    This appendix provides the proof of Lemma \ref{lem-stri}.
    \begin{proof}[Proof of Lemma \ref{lem-stri}]
        It suffices to prove the estimates for $\beta=1$. Recalling the linear system \eqref{eq-linear}, the energy estimates gives 
        \begin{align}\label{est-ener}
  		\big|\hat{S}(t,\xi)\hat{F}(\xi)\big|\lesssim e^{-\nu |\xi|^2 t}\big| \hat{F}(\xi)\big|.
  	    \end{align}
    Let $P_j$ denote the Littlewood--Paley multipliers localized to
  	\(|\xi|\simeq R=2^j\). The main step is to prove the bound, 
    \begin{align}\label{est-p}
        \|S(t)P_jF\|_{L^\infty}\lesssim R^2(1+t)^{-\frac{1}{2}} e^{-c\nu R^2t}\|P_jF\|_{L^1},\quad j\in\mathbb{Z}.
    \end{align}
    Bernstein's inequality yields that
  		\begin{equation} \label{est-1}
  		    \begin{aligned}
  			\|S(t)P_jF\|_{L^{\infty}}\lesssim& R\| S(t)P_j F\|_{L^2}\\
  			\lesssim& e^{-c\nu R^2 t}R\|P_j F\|_{L^2}\\
  			\lesssim& e^{-c\nu R^2 t}R^2 \|P_j F\|_{L^1},
  		\end{aligned}
  		\end{equation}
        then it suffices to prove \eqref{est-p} for $t \geqslant1$.
    Let $\alpha_j=|\nu_2|R^2$. Suppose that $\alpha_j\geq c_0$, where $c_0>0$ is sufficient small. Then it holds that $\nu R^2\gtrsim c_1$ for some $c_1>0$. Combining with \eqref{est-1}, we have
  		\begin{align*}
  			\|S(t)P_jF\|_{L^{\infty}}\lesssim&  (1+t)^{-\frac{1}{2}} e^{-c'\nu R^2 t}R^2\|P_j F\|_{L^1},
  		\end{align*}
        for some $c'>0$. We next consider the case $\alpha_j\leq c_0$. Let $\chi_j$ denote the symbol of $P_j$. By Inverse Fourier transformation, it suffices to prove that
  		\begin{equation*}
  			K_j(t,y,z)=\int_{\mathbb{R}^2} e^{i(y\eta+zl)}\hat{S}(t,\xi)\chi_j(\xi) d\xi
  		\end{equation*} 
  		satisfies
  		\begin{align*}
  			\|K_j(t,y,z)\|_{L^\infty_{y,z}}\lesssim R^2 (1+t)^{-\frac{1}{2}} e^{- c\nu R^2 t}.
  		\end{align*}
        We express $K$ in the polar coordinate  $(\eta,l)=Rr(\cos\theta,\sin\theta)$ and set
  		 \begin{align*}
  		 	(y,z)=R^{-1}\rho(\cos\varphi,\sin\varphi),\quad b_j(\theta)=\gamma\left(\frac{\sin\theta}{16\alpha_j}\right),
  		 \end{align*}
  		 where $\gamma\in C_c^{\infty}(\mathbb{R})$ is an even cut-off function satisfying
  		 \begin{equation*}
  		 	0\leq\gamma\leq1,\quad\gamma(x)=1\ (|x|\leq1),\quad \gamma(x)=0\ (|x|\geq2).
  		 \end{equation*} 
  	We decompose $K$ into two parts
  	\begin{align*}
  		&K_j^1(t,\rho,\varphi)=R^2\int_{1/2}^2\int_{-\pi}^{\pi}e^{ir\rho\cos(\theta-\varphi)}\hat{S}(t, r,\theta)b_j(\theta)A(r)r d \theta d r,\\
  		&K_j^2(t,\rho,\varphi)=R^2\int_{1/2}^2\int_{-\pi}^{\pi}e^{ir\rho\cos(\theta-\varphi)}\hat{S}(t,r,\theta)(1-b_j(\theta))A(r)r d \theta d r,
  	\end{align*}
  	where \(A(r)=\chi_0(\xi)\).
    For $K^1_j$, since the angular support of $b_j$ has measure $O(\alpha_j)$, the energy estimate \eqref{est-ener} implies
  	\begin{align*}
  		\|K^1_j\|_{L^\infty_{\rho,\varphi}}\lesssim R^2e^{-c\nu R^2 t}\alpha_j\lesssim R^2(1+t)^{-\frac{1}{2}}e^{-c'\nu R^2 t}.
  	\end{align*}
    Turning to $K^2_j$, it suffices to consider $0\leq\theta\leq\pi$, and the other case can be treated similarly. On the support of $1-b_j$, we have
  	\begin{align}\label{est-k2}
  		\alpha_j r^2\leq 4\alpha_j\leq\frac{1}{4}\sin(\theta).
  	\end{align}
  	Moreover, the representation of $\hat{S}(\xi)$ can be transformed into a sum of two branches
  	\begin{equation*}
  		\hat{S}(t,r,\theta)=e^{-\nu_1 R^2 r^2 t}\sum_{\mathfrak{c}=\pm1}e^{\mathfrak{c} it\sin\theta}e^{ \mathfrak{c}it\psi_1(r,\theta)}\Xi^{\mathfrak{c}}(r,\theta),
  	\end{equation*}
  	where we denote
  	\begin{align*}
    &\psi(r,\theta)=\big((\sin\theta)^2-\alpha_j^2r^4\big)^{\frac{1}{2}}, \quad N(r,\theta)=\left(\begin{array}{cc}
  			\nu_2R^2r^2&i\sin \theta\\
  			i\sin \theta&-\nu_2R^2r^2
  		\end{array}\right),\\
  		&\psi_1(r,\theta)=\psi-\sin\theta,\quad\Xi^{\mathfrak{c}}(r,\theta)=\frac{1}{2}\left(I-\mathfrak{c} i\frac{N}{\psi}\right).
  	\end{align*}
    By symmetry, it suffices to consider $\mathfrak{c}=1$ and we therefore omit the superscript $\mathfrak{c}$ in the following proof.
  	Define
  	\begin{equation*}
  		B(t,\theta,z)=\int_{1/2}^{2}e^{-\nu_1 R^2 r^2 t}e^{irz+it\psi_1(r,\theta)}\Xi(r,\theta)(1-b_j(\theta))A(r)r dr.
  	\end{equation*}
    We claim the following bounds
  	\begin{align}\label{est-B}
  		\|B(\theta,z)\|_{L^\infty_{\theta,z}}+\int_{0}^{\pi}\|\partial_{\theta}B(\theta,z)\|_{L^\infty_{z}}d\theta+\int_{\mathbb{R}}\|\partial_{z}B(\theta,z)\|_{L^\infty_{\theta}}dz\lesssim C e^{-c'\nu_1R^2t}.
  	\end{align}
  	Indeed, by \eqref{est-k2}, for each component of matrices $\Xi$, it holds that
  	\begin{align*}
  		\frac{|\nu_2 R^2r^2|}{\psi}+\frac{|\sin\theta|}{\psi}\lesssim 1,
  	\end{align*} 
    which gives the first bound. Direct differentiation of $\psi_1$ and $\Xi$ gives 
    \begin{align*}
        |\partial_{\theta}\psi_1|\lesssim\frac{\alpha^2_j|\cos\theta|}{(\sin\theta)^2},\quad |\partial_{\theta}\Xi|\lesssim\frac{\alpha_j|\cos\theta|}{(\sin\theta)^2}.
    \end{align*}
    Hence, on the support of  $1-b_j(\theta)$, it holds that
    \begin{align*}
  		&\left| \int_0^{\pi}\sup_{z}|(\partial_{\theta}B)(t,\theta,z)|d\theta\right|\\
  		\lesssim&e^{-2c\nu_1R^2t}\left(\int_{16\alpha_j}^{1}\left(\frac{\alpha_j^2t}{y}+\frac{\alpha_j}{y^2}+\frac{\alpha_j^2}{y^3} \right)dy+1\right)\\
  		\lesssim&e^{-c\nu_1R^2t}.
  	\end{align*}
    \color{black}
  	Turning to the third bound, direct differentiation gives 
  	\begin{align*}
  		|\partial_r^k\psi_1(r,\theta)|\lesssim \alpha_j,\quad |\partial_r^k\Xi^{\pm}(r,\theta)|\lesssim1,\quad \mathrm{for}\ k=1,2.
  	\end{align*}
  	Integrating by parts in $r$ twice for $|z|\geq1$, one have
  	\begin{align*}
  		&\int_{\mathbb{R}}\sup_{\theta}|(\partial_zB)(t,\theta,z)|dz\\
  		\lesssim&\int_{|z|\leq1}\sup_{\theta}|(\partial_zB)(t,\theta,z)|dz+
  		\int_{|z|\geq1}\sup_{\theta}|(\partial_zB)(t,\theta,z)|dz\\
        \lesssim&e^{-c'\nu_1R^2t}+
  		\int_{|z|\geq1}e^{-c'\nu_1R^2t}(1+|z|)^{-2}dz\\
  		\lesssim&  e^{-c'\nu_1R^2t},
  	\end{align*}
    which implies the estimates \eqref{est-B}. 
  	
    Returning to $K^2_j$, we further split it into two parts:
  	\begin{align*}
  		K^{2}_j(t,\rho,\varphi)&=R^2\int_{I_{\delta}}e^{\pm it\sin\theta}B(t,\theta,\rho\cos(\theta-\varphi))d\theta+R^2\int_{[0,\pi]\backslash I_{\delta}}e^{\pm it\sin\theta}B(t,\theta,\rho\cos(\theta-\varphi))d\theta\\
  		&=:K^{2,1}_j+K^{2,2}_j,
  	\end{align*}
  	where $I_{\delta}=[\frac{\pi}{2}-\delta,\frac{\pi}{2}+\delta]$.
  	For $K^{2,1}_j$, 
  	combining the fact $|I_{\delta}|\leq 2\delta$ with \eqref{est-B}, we deduce that
  	\begin{align*}
  		\|K^{2,1}_j\|_{L^\infty_{\rho,\varphi}}\lesssim \delta R^2 e^{-c\nu_1 R^2 t}.
  	\end{align*}
    Turning to $K^{2,2}_j$, it holds on the interval $[0,\pi]\backslash I_{\delta}$ that
  	\begin{equation}\label{est-g}
  		|\cos\theta|\geq \sin\delta\geq\frac{\delta}{2}.
  	\end{equation}
  	Consequently, integration by parts in $\theta$, together with \eqref{est-B}, gives
  	\begin{align*}
  		K^{2,2}_j=&R^2\int_{[0,\pi]\backslash I_{\delta}}\frac{1}{it\cos\theta}\partial_{\theta}\left(e^{\pm it\sin\theta}\right)B(t,\theta,\rho\cos(\theta-\varphi))d\theta\\
  		\lesssim&\frac{R^2}{\delta t}	\|B(\theta,\rho\cos(\theta-\varphi))\|_{L^\infty_{\theta}}\\
  		&+R^2\left| \int_{[0,\pi]\backslash I_{\delta}}\frac{1}{it\cos\theta}e^{\pm it\sin\theta}(\partial_{\theta}B)(t,\theta,\rho\cos(\theta-\varphi))d\theta\right|\\
        &+R^2\left| \int_{[0,\pi]\backslash I_{\delta}}\frac{1}{it\cos\theta}e^{\pm it\sin\theta}(\partial_{z}B)(t,\theta,\rho\cos(\theta-\varphi))\rho\sin(\theta-\varphi)d\theta\right|\\
        \lesssim &\frac{R^2}{\delta t}e^{-c'\nu_1 R^2 t}+\frac{R^2}{\delta t}\int_{\mathbb{R}}|(\partial_zB)(t,\theta(z),z)|dz\\
  		\lesssim &\frac{R^2}{\delta t}e^{-c'\nu_1 R^2 t}+\frac{R^2}{\delta t}\int_{\mathbb{R}}\sup_{\theta}|(\partial_zB)(t,\theta,z)|dz\\
  		\lesssim &\frac{R^2}{\delta t}e^{-c'\nu_1 R^2 t}.
  	\end{align*}
  	Here we change the variable $z=\rho\cos(\theta-\varphi)$ on each interval where $\cos(\theta-\varphi)$ is monotone in the third-to-last inequality.
  	Choosing $\delta=t^{-\frac{1}{2}}$ proves \eqref{est-p}. Summing over $j\in\mathbb{Z}$ then yields \eqref{est-d1}. Indeed, it follow from
    \begin{align*}
  		\|S(t)F\|_{L^\infty}\lesssim& (1+t)^{-\frac{1}{2}}(\nu t)^{-\delta}
  		\left(\sum_{j=0}^{\infty}2^{(2-2\delta)j}\|P_j F\|_{L^1}+\sum_{j=1}^{\infty}2^{-(2-2\delta)j}\|P_{-j} F\|_{L^1}\right)\\
  		\lesssim&(1+t)^{-\frac{1}{2}}(\nu t)^{-\delta}\|F\|_{W^{2,1}}.
  	\end{align*}
    The estimate of \eqref{est-d2} follows from a standard $TT^*$ argument. 
  	Define operator $T$ by the symbol
  	\begin{equation*}
  		\widehat{TF}(t,\xi)=\hat{S}(t,\xi)\chi_j(\xi)\hat{F}(\xi).
  	\end{equation*}
    
  	We decompose $T$ into there multipliers
  	\begin{align*}
    &T=T^{+1}_{1}+T_1^{-1}+T_2+T_3\\
  		&\widehat{T^{\mathfrak{c}}_1F}(t,\xi)=e^{-\nu_1|\xi|^2t+ it\mathfrak{c}\psi(\xi)}\Xi^{\mathfrak{c}}(\xi)(1-\mathfrak{b}_j(\xi))\chi_j(\xi)\mathbf{1}_{\{\alpha_j\leq c_0\}}\hat{F}(\xi),\\
  		&\widehat{T_2F}(t,\xi)=\hat{S}(t,\xi)\,\mathfrak{b}_j(\xi)\chi_j(\xi) \mathbf{1}_{\{\alpha_j\leq c_0\}}\hat{F}(\xi),\\
  		&\widehat{T_3F}(t,\xi)=\hat{S}(t,\xi)\chi_j(\xi) \mathbf{1}_{\{\alpha_j\geq c_0\}}\hat{F}(\xi).
  	\end{align*}
  	where $\mathfrak{b}_j(\xi)=b_j(\theta)$. 
  	For a function $G(t,y,z)\in C_c^{\infty}(\mathbb{R}^+\times\mathbb{R}^2)$, the space-time adjoint $T_1^{\mathfrak{c},*}$ is given by
  	\begin{align*}
  		&\widehat{T_1^{\mathfrak{c},*}G}(\xi)=\int_{0}^{\infty}\hat{T}_1^*(s,\xi)\hat{G}(s,\xi) ds,\quad\hat{T}_1^*(s,\xi)=e^{-\nu_1|\xi|^2s- is\psi(\xi)}\Xi^*(\xi)(1-\mathfrak{b}_j(\xi))\chi_j(\xi).
  	\end{align*}
  	Then it holds that
  	\begin{align*}
  		\|T_1^{\mathfrak{c},*}G\|_{L^2_{y,z}}^2=&\left\langle \int_0^{\infty} T_1^{\mathfrak{c},*}(s)G(s)ds,\int_0^{\infty} T_1^{\mathfrak{c},*}(t)G(t)dt\right\rangle_{L^2_{y,z}}\\
  		=&\int_{0}^{\infty}\int_{0}^{\infty}\left\langle T^{\mathfrak{c}}_1(t)T^{\mathfrak{c},*}_1(s)G(s),G(t)\right\rangle_{L^2_{y,z}}dsdt,
  	\end{align*}
  	where
  	\begin{align*}
  		\hat{T}^{\mathfrak{c}}_1(t)\hat{T}_1^{\mathfrak{c},*}(s)(\xi)=e^{-\nu_1|\xi|^2(t+s)+ i(t-s)\psi(\xi)}\Xi(\xi)\Xi^*(\xi)(1-\mathfrak{b}_j(\xi))^2\chi_j^2(\xi).
  	\end{align*}
  	Arguing as in the estimate of $K^2_j$, one can prove that
  	\begin{align*}
  		\|T^{\mathfrak{c}}_1(t)T_1^{\mathfrak{c},*}(s)G(s)\|_{L^{\infty}_{y,z}}\lesssim R^2e^{-c\nu_1R^2(t+s)}(1+|t-s|)^{-1/2}\|G(s)\|_{L^1_{y,z}}.
  	\end{align*}
  	Let $g(t)=\|G(t)\|_{L^1_{y,z}}$. It follows from Cauchy-Schwarz inequality that
  	\begin{align*}
  		\|T_1^{\mathfrak{c},*}G\|_{L^2_{y,z}}^2\lesssim&\iint\|T_1(t)T_1^{\mathfrak{c},*}(s)G(s)\|_{L^\infty}\|G(t)\|_{L^1}dsdt\\
  		\lesssim&R^2\iint e^{-c\nu_1R^2(t+s)}(1+|t-s|)^{-1/2}g(s)g(t)dsdt\\
  		\lesssim&R^2\iint e^{-c\nu_1R^2(t+s)}(1+|t-s|)^{-1/2}g(t)^2 dsdt.,\\
  		\lesssim&R^2\sup_{t\geq0}\int_{0}^{\infty}e^{-c\nu_1R^2(t+s)}(1+|t-s|)^{-1/2}ds \|g\|_{L^2_t}^2=:R^2 \mathfrak{C} \|G\|_{L^2_tL^1_{y,z}}^2.
  	\end{align*}
  	Since $t+s\geq|t-s|$ for $t,s\geq0$, we obtain
  	\begin{align*}
  		\mathfrak{C}=&\sup_{t\geq0}\int_{0}^{\infty}e^{-c\nu_1R^2(t+s)}(1+|t-s|)^{-1/2}ds\\
  		\leq&2\int_{0}^{\infty}e^{-c\nu_1R^2 \tau}(1+\tau)^{-\frac{1}{2}}d\tau\\
  		\lesssim&\int_{0}^{1}(\nu_1R^2\tau)^{-1/2}d\tau+\int_{1}^{\infty}e^{-c\nu_1R^2\tau}\tau^{-\frac{1}{2}}d\tau\\
  		\lesssim&\nu^{-\frac{1}{2}}R^{-1}.
  	\end{align*}
  	Therefore, it holds that
  	\begin{align*}
  		\|T_1^{\mathfrak{c},*}G\|_{L^2_{y,z}}\lesssim \nu^{-\frac{1}{4}}R^{\frac{1}{2}}\|G\|_{L^2L^1}.
  	\end{align*}
  	We get by duality that
  	\begin{align*}
  		\|T^{\mathfrak{c}}_1F\|_{L^2_tL^\infty_{y,z}}=&\sup_{\|G\|_{L^2_tL^1_{y,z}}\leq1}\left|\int_{0}^{\infty} \langle T^{\mathfrak{c}}_1(t)F,G(t)\rangle dt\right| \\
  		=&\sup_{\|G\|_{L^2_tL^1_{y,z}}\leq1}\left| \langle P_jF,T_1^{\mathfrak{c},*}G\rangle \right| \\
  		\lesssim& \nu^{-\frac{1}{4}}R^{\frac{1}{2}}\|P_jF\|_{L^2_{y,z}}.
  	\end{align*}
  	The operators $T_2$ and $T_3$ satisfy the same bound by the argument used in the proof of \eqref{est-d1}. Summing over $j\in\mathbb{Z}$ yields $\eqref{est-d2}$. Finally, \eqref{est-d3} follows by combining the preceding procedure with the argument of \cite[Lemmas A.3 and A.5]{LSWZ25}. This completes the proof. 
    \end{proof}

    \vspace{4 mm}
	\noindent \textbf{Acknowledgements:} The authors would like to thank Prof. Fei Wang and Prof. Chengjie Liu for the meaningful discussions.


\begin{thebibliography}{99}
  		
		\bibitem{A25}
		R. Arbon, Enhanced Dissipation, Taylor Dispersion, and Inviscid Damping of Couette flow in the Boussinesq system on the Plane. Nonlinearity {\bf 38} (2025), no.~10, Paper No. 105011, 76 pp.
		
		\bibitem{AB25}	
		R. Arbon, J. Bedrossian, Quantitative hydrodynamic stability for Couette flow on unbounded domains with Navier boundary conditions. Comm. Math. Phys. 406(6): Paper No. 129, 57 pp (2025).
		
		\bibitem{BBCD2023}
		J. Bedrossian, R. Bianchini, M. Coti Zelati, M. Dolce, Nonlinear inviscid damping and shear-buoyancy instability in the two-dimensional Boussinesq equations. Comm. Pure Appl. Math. 76(12): 3685–3768 (2023).
  		
  		
  		\bibitem{BGM17}
  		J. Bedrossian, P. Germain, N. Masmoudi, On the stability threshold for the 3D Couette flow in
  		Sobolev regularity. Ann. of Math.  185(2): 541--608 (2017).
  		
  		\bibitem{BGM20}
  		J. Bedrossian, P. Germain, N. Masmoudi, Dynamics near the subcritical transition of the 3D Couette flow I: Below threshold case. Mem. Amer. Math. Soc. 266(1294), v+158pp (2020).
  		
  		\bibitem{BGM22}
  		J. Bedrossian, P. Germain, N. Masmoudi, Dynamics near the subcritical transition of the 3D Couette flow II: Above threshold case. Mem. Amer. Math. Soc. 279(1377) v+135pp (2022).
  		
  		\bibitem{BHILW25a}  J. Bedrossian, S. He, S. Iyer, L. Li, and F. Wang, Stability threshold of close-to-Couette shear flows with no-slip boundary conditions in 2D,
  		\textit{arXiv preprint arXiv:2510.16378}, 2025.
  		
  		\bibitem{BHIW24}
  		J. Bedrossian, S. He, S. Iyer, F. Wang, Uniform Inviscid Damping and Inviscid Limit of the 2D Navier-Stokes equation with Navier Boundary Conditions. arXiv:2405.19249  (2024).
  		
  		\bibitem{BHIW25}
  		J. Bedrossian, S. He, S. Iyer, F. Wang, Stability threshold of nearly-Couette shear flows with Navier boundary conditions in 2D.  Comm. Math. Phys. 406(2): Paper No. 28, 42 pp (2025).
 
  		\bibitem{BMV16}
  		J. Bedrossian, N. Masmoudi, V. Vicol, Enhanced dissipation and inviscid damping in the inviscid limit of the Navier-Stokes equations near the two dimensional Couette flow. Arch. Ration. Mech. Anal. 219(3): 1087--1159  (2016).
  		
  		\bibitem{BVW18}
  		J. Bedrossian, V. Vicol, F. Wang, The Sobolev stability threshold for 2D shear flows near Couette. J. Nonlinear Sci. 28(6): 2051--2075 (2018).
  	
  		
  		
  		\bibitem{CLWZ20}
  		Q. Chen, T. Li, D. Wei, Z. Zhang, Transition threshold for the 2-D Couette flow in a finite channel. Arch. Ration. Mech. Anal. 238(1): 125--183 (2020).
  		
  		\bibitem{CWZ24}
  		Q. Chen, D. Wei, Z. Zhang, Transition threshold for the 3D Couette flow in a finite channel. Mem. Amer. Math. Soc. 296(1478), v+178pp (2024).
  		
  		\bibitem{CD23}
  		M. Coti Zelati, A. Del Zotto, Suppression of lift-up effect in the 3D Boussinesq equations around a stably stratified Couette flow. Quarterly of Applied Mathematics, 83(2): 389–401 (2024).
  		
  		\bibitem{CDW24}
  		M. Coti Zelati, A. Del Zotto, K. Widmayer, Stability of viscous three-dimensional stratified Couette flow via dispersion and mixing. arXiv:2402.15312 (2024).
  		Comm. Pure Appl. Math. (2026): e70048. https://doi.org/10.1002/cpa.70048
  		
  		
  		\bibitem{CDW25}
  		M. Coti Zelati, A. Del Zotto, K. Widmayer, On the stability of viscous three-dimensional rotating Couette flow. arXiv:2501.17735 (2025).	
  		
  		\bibitem{C23}
  		Q. Chen, S. Ding, Z. Lin, and Z. Zhang. Nonlinear stability for 3-d plane poiseuille flow in a
  		finite channel. arXiv preprint arXiv:2310.11694, 2023.
  		
        \bibitem{CWY25}
        Y. Chen, W. Wang, G. Yang, Stability threshold of Couette flow for Boussinesq equations in $\mathbb{R}^2$. 	arXiv:2508.11908 (2025).
  		
  		\bibitem{CEW20} M. Coti Zelati, T. M. Elgindi, and K. Widmayer, Enhanced dissipation in the Navier-Stokes equations near the Poiseuille flow, Comm. Math. Phys., 378, 2020, 987-1010.
  		
  		\bibitem{CJWZ25}
  		Q. Chen, H. Jia, D. Wei, and Z. Zhang. Asymptotic stability of the Kolmogorov flow at high	reynolds numbers. arXiv preprint arXiv:2510.13181, 2025.
  		
  		\bibitem{CWW25}
  		S. Cui, L. Wang, W. Wang, Stability threshold of Couette flow for 3D Boussinesq system in Sobolev spaces. arXiv:2504.16401 (2025).
  		
  		\bibitem{DWZ21}
  		W. Deng, J. Wu, P. Zhang, Stability of Couette flow for 2D Boussinesq system with vertical dissipation. J. Funct. Anal. 281(12): Paper No. 109255, 40 pp (2021).
  		
  		
  		\bibitem{HNZ26}
  		S. He, B. Niu, W. Zhao, Couette flow with Robin boundary condition (I): the viscosity-independent friction. arXiv:2607.17468 (2026).
  		
  		\bibitem{HLX26} 
  		F. Huang, R. Li and L. Xu,
  		Nonlinear stability threshold for compressible Couette flow,
  		\textit{Comm. Math. Phys.}, 407 (1), 11 (2026).
  		  		
  		\bibitem{HLSX26}
  		W. Huang, Z. Luo, Y. Sun, X. Xu, Stability threshold for 3D Boussinesq equations with rotation near the Couette flow and stratified temperature. arXiv:2602.10591 (2025).
  		
  		\bibitem{HSX24a}
  		W. Huang, Y. Sun, X. Xu, On the Sobolev stability threshold for 3D Navier-Stokes equations with rotation near the Couette flow. arXiv:2409.05104 (2024).
  		
  		
  		
  		\bibitem{K25} N. Knobel, Suppression of fluid echoes and Sobolev stability threshold for 2D dissipative fluid equations around Couette flow, arXiv:2505.23391v1.
  		
  		
  		\bibitem{LLZ25}
  		H. Li, N. Liu, W. Zhao, Stability threshold of the two-dimensional Couette flow in the whole plane. arXiv:2501.10818 (2025).
  		
  		\bibitem{LMZ25}
  		H. Li, N. Masmoudi, W. Zhao, Asymptotic Stability of Two-Dimensional Couette Flow in a Viscous Fluid, Arch. Ration. Mech. Anal. 249(5): Paper No. 56 (2025).
  		
  		
  		\bibitem{LSWZ25}
  		M. Li, C. Sun, C. Wang, D. Wei, Z. Zhang, Transition threshold for the Navier-Stokes-Coriolis system at high Reynolds numbers. arXiv:2511.21294v1 (2025).
  		
  		
  		\bibitem{LWZ25} M. Li, C. Wang, and Z. Zhang, Nonlinear stability of 2-D Couette flow for the compressible Navier-Stokes equations at high Reynolds number.   \textit{ArXiv}: 2508.07291, (2025). 
  		
  		
  		\bibitem{LWZ20} T. Li, D. Wei and Z. Zhang, Pseudospectral bound and transition threshold for the 3D Kolmogorov flow, Comm. Pure Appl. Math., 73, 2020, 465-557.  		
  		
  		\bibitem{LSZ26} Z. Li, S, Shen, Z. Zhang, Asymptotic stability threshold of the 2-D monotone shear flow with no-slip boundary condition. arXiv:2603.01797 (2026).


        \bibitem{LWZ25}
        T. Liang, Y. Li, X. Zhai, Stability threshold for two-dimensional Boussinesq system near-Couette shear flow in a finite channel, arXiv:2504.02669. (2025).
        
  		\bibitem{L20}
  		K. Liss, On the Sobolev stability threshold of 3D Couette flow in a uniform magnetic field. Comm. Math. Phys. 377(2): 859–908 (2020).
  		
  		\bibitem{MSZ22} N. Masmoudi, B. Said-Houari, and W. Zhao, Stability of Couette flow for 2D Boussinesq system without thermal diffusivity, Arch. Ration. Mech. Anal.   245(2), 2022, 645-752 (2022).
  		
  		\bibitem{MZZ23} N. Masmoudi, C. Zhai and W. Zhao, Asymptotic stability for two-dimensional Boussinesq systems around the Couette flow in a finite channel, J. Funct. Anal., 284(1), 2023, 109736.
  		
  		\bibitem{MZ22} N. Masmoudi and W. Zhao, Stability threshold of two-dimensional Couette flow in Sobolev spaces, Ann. Inst. Henri Poincare, 39, 2022, 245-325.
  		
  		
  		
  		
  		\bibitem{NZ24}
  		B. Niu, W. Zhao, Improved stability threshold of the Two-Dimensional Couette flow for Navier-Stokes-Boussinesq Systems via quasi-linearization.  Commun. Pure Appl. Anal, (2024).
  		
		
		\bibitem{RW26}
		X. Ren and D. Wei, Transition threshold of Couette flow for 2D Boussinesq equations, J. Funct. Anal. {\bf 290} (2026), no.~9, Paper No. 111383, 44 pp.
		
  		\bibitem{WZ21}
  		D. Wei, Z. Zhang, Transition threshold for the 3D Couette flow in Sobolev space. Comm. Pure Appl. Math. 74(11): 2398--2479  (2021).
  		
  		\bibitem{WZ23}
  		D. Wei, Z. Zhang, Nonlinear enhanced dissipation and inviscid damping for the 2D Couette flow. Tunis. J. Math. 5(3): 573–592 (2023).
  		
  		\bibitem{WZ26}
  		D. Wei and Z. Zhang, Asymptotic stability threshold of the 2D Couette flow in a finite channel, Ann. Inst. H. Poincar$\mathrm{\acute{e}}$  C Anal. Non Lin$\mathrm{\acute{e}}$aire {\bf 43} (2026), no.~3, 501--540.

        \bibitem{YL18}
        Yang, J., Lin, Z. Linear Inviscid Damping for Couette Flow in Stratified Fluid. J. Math. Fluid Mech. 20, 445–472 (2018).
        
  		\bibitem{ZZ23}
  		C. Zhai, W. Zhao, Stability threshold of the Couette flow for Navier-Stokes Boussinesq system with large Richardson Number $\gamma^2 > \frac 14$. SIAM J. Math. Anal. 55(2): 1284–1318 (2023).
  		
  		\bibitem{ZZi23}
  		Z. Zhang, R. Zi, Stability threshold of Couette flow for 2D Boussinesq equations in Sobolev spaces. J. Math. Pures Appl. 179(9): 123–182 (2023).
  		
  	
  		
  		
  		\bibitem{ZZ25}
  		W. Zhao, R. Zi, Asymptotic stability of Couette flow in a strong uniform magnetic field for the Euler-MHD system. Arch. Ration. Mech. Anal. 248(3):47, 2024.
  		
  		
 		

        
  	\end{thebibliography}
\end{document}